\documentclass[11pt,a4paper,reqno]{amsart}

\usepackage[english]{babel}
\usepackage{enumitem}
\usepackage{color}
\usepackage{amsmath,amsfonts,amssymb,amsopn,amscd,amsthm}

    \setlength{\hoffset}{0cm}
    \setlength{\textwidth}{16cm}
    \setlength{\voffset}{-1.5cm}
    \setlength{\textheight}{24cm}
    \setlength{\oddsidemargin}{0cm}
    \setlength{\evensidemargin}{0cm}

    \newtheorem{definition}{Definition}[section]
    
    \newtheorem{theorem}[definition]{Theorem}
    \newtheorem{proposition}[definition]{Proposition}
    \newtheorem{corollary}[definition]{Corollary}
    \theoremstyle{remark}
    
    \newtheorem{remark}[definition]{Remark}

\title{Asymptotic Results in Solvable Two-charge Models}
\author[M.~Dal Borgo]{ Martina Dal Borgo} 
\address[M.~Dal Borgo]{\,Institut f\"ur Mathematik, Universit\"at Z\"urich --- Winterthurerstrasse 190, 8057 Z\"urich, Switzerland}
\email{Martina.Dalborgo@math.uzh.ch}
\author[E.~Hovhannisyan]{Emma Hovhannisyan} 
\address[E.~Hovhannisyan]{\,Institut f\"ur Mathematik, Universit\"at Z\"urich --- Winterthurerstrasse 190, 8057 Z\"urich, Switzerland}
\email{Emma.Hovhannisyan@math.uzh.ch}
 \author[A.~Rouault]{Alain Rouault} 
\address[A.~Rouault]{LMV B\^atiment Fermat, Universit\'e Versailles-Saint-Quentin, France}
\email{Alain.Rouault@math.uvsq.fr}

\renewcommand{\d}{\delta}
\newcommand{\half}{\frac{1}{2}}
\renewcommand{\L}{\Lambda}
\newcommand{\g}{\gamma}
\newcommand{\G}{\Gamma}
\newcommand{\N}{\mathbb{N}}
\newcommand{\R}{\mathbb{R}}
\newcommand{\C}{\mathbb{C}}
\renewcommand{\P}{\mathbb{P}}
\newcommand{\E}{\mathbb{E}}
\newcommand{\Var}{\mathrm{Var}}
\renewcommand{\l}{\lambda}

\begin{document}
 \maketitle \vspace{-5mm}

\begin{abstract}
In \cite{RSX13}, a solvable two charge ensemble of interacting charged particles on the real line in the presence of the harmonic oscillator potential is introduced. It can be seen as a special form of a grand canonical ensemble with the total charge being fixed and unit charge particles being random. Moreover, it serves as an interpolation between the Gaussian orthogonal and the Gaussian symplectic ensembles and maintains the Pfaffian structure of the eigenvalues.  A similar solvable ensemble of charged particles on the unit circle was studied in \cite{SS14, F10}. \\
In this paper we explore the sharp asymptotic behavior of the number of unit charge particles on the line and on the circle,  as the total charge goes to infinity. We establish extended central limit theorems, Berry-Esseen estimates and precise moderate deviations using the machinery of the mod-Gaussian convergence developed in ~\cite{ FMN16, KN12, JKN11, DKN15, FMN17}. Also a large deviation principle is derived using the G\"artner-Ellis theorem. \\
\end{abstract}
{\bf MSC 2010 subject classifications:} Primary 60B20, 82B05, 15B52, 15A15; Secondary \\
33C45, 60G55, 60F05, 60F10, 62H10.\\
{\bf Keywords:} Random matrices, two-charge ensembles, large deviation principle, moderate deviations, central limit theorem, Berry-Esseen estimate, local limit theorem, Gaussian orthogonal ensemble, circular orthogonal ensemble.

\bigskip
\section{Introduction}
\subsection{A general two-charge ensemble}
We first present a general model of charged particles with charge ratio $1:2$ interacting via a logarithmic potential.  Let $L, M, N$ be non-negative integers such that $L+2M=N.$ The two-component log-gas to be considered consists of $L$ particles with unit charge and $M$ particles with charge two,  located either on the line or on the unit circle and  interacting via the
logarithmic potential
\begin{align}\label{Energy}
U_{L,M}(\xi, \zeta) :=  -\sum_{1\leq i<j \leq L} \log |\xi_i - \xi_j| - 4 \sum_{1\leq i<j \leq M} \log |\zeta_i - \zeta_j| - 2 \sum_{i=1}^L \sum_{j=1}^M \log | \xi_i-\zeta_j |,
\end{align}
 where  the charge one particles are located at $\xi = \left(\xi_1, \dots, \xi_L\right)$ and charge two  particles at $\zeta = \left(\zeta_1, \dots, \zeta_M\right),$ respectively. \\ 
The system may be in the presence of an external field, and the interaction energy between the charges and the field is
$\sum_{i=1}^L V(\xi_i) + 2 \sum_{k=1}^M V(\zeta_k)$ for some potential $V$.
The total energy is then
\begin{align*}
E_{L,M} (\xi, \zeta) = U_{L,M}(\xi, \zeta) +\sum_{i=1}^L V(\xi_i) + 2 \sum_{k=1}^M V(\zeta_k).
\end{align*}
The partition function $Z_{L,M}$ of the system is 
\begin{align*}
Z_{L,M} = \frac{1}{L! M!} \int_{E^L}  \int_{E^M} e^{-E_{L,M} (\xi, \zeta)} d\mu^L(\xi) d\mu^M(\zeta) \ , \ \ (E = \R \ \hbox{or}\ \mathbb T).
\end{align*}
We study a special form of the grand canonical ensemble, where the number of charges $L$ and $M$ are random but they sum up to the total charge $N=2n$ for some $n\in \N.$ Moreover, the probability of the system having $L$ particles with unit charge and $M$ particles with charge two is equal to
\begin{align*}
X^L \frac{Z_{L,M}}{Z_N(X)},
\end{align*} 
where 
\begin{align*}
Z_N(X) = \sum_{L+2M = N} X^L Z_{L,M} 
\end{align*}
and $X$ is a parameter, called \textit{fugacity}, which determines the probability of the system having a particular population vector $(L,M)$. \\ 
In what follows we distinguish two models of two-charge ensemble, on the real line and on the unit circle. For both models,  we study the distribution of the number of charge one particles for the proper scaling of the parameter $X$, when the total charge $N$ tends to infinity.  \\
The structure of the paper goes as follows. In Subsections \ref{introduction: line} and \ref{introduction: circle} we discuss separately two-charge models on the line and on the circle, respectively. Thereafter, in Subsection \ref{introduction: mod-Gauss} we recall the definition of mod-Gaussian convergence and the limiting theorems this convergence implies. In Section \ref{results: line},  the limiting results for the two-charge models on the line are stated for two different scaling of the fugacity parameter. In Section \ref{results: circle}, we introduce the limiting theorems for the two charge ensemble on the circle.
\subsubsection{A two-charge ensemble on the real line} \label{introduction: line}
 The charged particle model on the line, was introduced in \cite{RSX13}. A generalization of these models to more than two types of charges recently has been studied in \cite{MPT15}. The motivation to study the two charged particle model on the line comes from its analogy with the real Ginibre ensemble (instead of complex conjugate pairs of eigenvalues, we have particles of charge $2$ on the real line). For more details on the complex and real eigenvalues of the real Ginibre ensembles and the existence of Pfaffian structure on the particles in these ensembles, see \cite{G65, FN07, BS09}. \\
 We study the two charge models on the line in the presence of the harmonic oscillator potential $V(x) = \frac{x^2}{2}.$ As a result, the case $X = 0$ gives exactly the Gaussian orthogonal ensemble ($\beta = 1$).  In contrast, as $X \to \infty,$ the above model corresponds to the Gaussian symplectic ensemble ($\beta = 4$). Therefore, the two-charge ensemble on the line can be interpreted as an interpolation between these two point processes.\\
We denote by $U_n$ the number of particles with unit charge  with the parameter $X.$ The aim is to investigate the limiting behavior of the sequence $\left(U_n \right)_{n \in \N},$ depending on the fugacity parameter $X.$
Typically, we will focus on two cases $X=1$ and $X=\sqrt{2n\gamma},$ for some $\gamma>0.$ The case $X = 1$ is the analog of the real Ginibre ensemble, i.e. the charge 1 particles play the role of real eigenvalues in real Ginibre and the charge 2 particles the role of pairs of complex conjugate eigenvalues.\\
For the case $X=\sqrt{2n\gamma},$ 
Rider, Sinclair and Xu, using Laplace method,  provide asymptotic formulas for the quantities $\E[U_n], \Var(U_n)$ (\cite{RSX13}). From our side, we will show large deviation principle (LDP) for the sequence $(U_n)_{n \in \N}$ as well as mod-Gaussian convergence for the normalized sequence $\left(\frac{U_n-\E[U_n]}{n^{1/3}}\right)_{n \in \N}.$ Note that the central limit theorem (CLT) will be one of the consequences of the mod-Gaussian convergence mentioned above. \\
The case $X=1$ has been studied in details in \cite{RSX13} and the limiting results such as CLT and LDP have been derived.  We extend their results by proving mod-Gaussian convergence for the sequence $\left(\frac{U_n - \E[U_n]}{n^{1/6}}\right)$ and deducing all its consequences such as precise moderate deviations and extended central limit theorem.  
In order to derive these limiting results, we will use the following characterization of the partition function, shown in Theorem 2.1 in \cite{RSX13} and following from the solvability of the ensemble 
\begin{align*}
Z_N(X)=\prod_{j=0}^{n-1} r_j(X),
\end{align*}
where 
\begin{align*}
r_j(X)=4(j+1)  \G\left( j+\half\right)  
\frac{L_{j+1}(-X^2)}{L_j(-X^2)},
\end{align*}
with $L_j(x) := L_j^{-1/2}(x)$ being the generalized $j$th Laguerre polynomial with  parameter $\alpha = -\half,$ defined as  
\[L_j^\alpha(x) = e^x \frac{x^{-\alpha}}{j!} \frac{d^j}{dx^j} (e^{-x}x^{j+\alpha})\,,\]
and orthogonal with respect to the measure on $(0, \infty)$ with density $x^\alpha e^{-x}$.

Thus, the probability generating function of $U_n$ can be written 
\begin{align}\label{2cl_prob_gen}
P_n(t) = \frac{Z_N(tX)}{Z_N(X)} = \frac{L_n(-t^2X^2)}{L_n(-X^2)}\,,
\end{align}
and then the moment generating function of $U_n$ is given by
\begin{align}\label{2cl_mom_gen}
\E\left[e^{zU_n}\right] = \frac{Z_N(e^zX)}{Z_N(X)} = \frac{L_n(-X^2 e^{2z})}{L_n(-X^2)}\,.
\end{align}

\subsubsection{A two-charge ensemble on the circle}\label{introduction: circle}
This model represents the circular version of the above model  and has been introduced and studied in \cite{F84B, F84A, F10, SS14}. We  have again $L$ particles with unit charge and $M$  particles with charge two that are located on the unit circle $\mathbb{T}$ and interact logarithmically in the absence of an external field. This ensemble forms a Pfaffian point process that interpolates between the circular and symplectic ensembles. \\
 We denote by $V_n$ the number of particles with unit charge when
 $X=n\rho,$  with $\rho>0.$ Following Section 6.7.1 of \cite{F10}, the probability generating function of $V_n$ is 
\begin{align}\label{2cc_prob_gen}
 P_n(t) = \E\left[ t^{V_n}\right] =\prod_{k=1}^n \frac{ \left(2n\rho t\right)^2 + (2k-1)^2  }{  \left(2n\rho  \right)^2 + (2k-1)^2 }.
\end{align}
Taking into account this polynomial structure, Forrester used an argument of Bender \cite{Be73} to prove the local limit theorem.  
In \cite{SS14}, the central limit theorem
has been derived for the sequence $(V_n)_{n \in \N},$ by a direct computation using 
the moment generating function, 
 \begin{align}\label{2cc_mom_gen}
 \E \left[ e^{zV_n}\right] = \prod_{k=1}^n \frac{ \left(2n\rho e^{z} \right)^2 + (2k-1)^2  }{  \left(2n\rho  \right)^2 + (2k-1)^2 }\,.
\end{align}
We will deduce the LDP for the sequence $(V_n)_{n \in \N},$ and the mod-Gaussian convergence with its consequences for the sequence $\left( \frac{V_n-\E\left[V_n\right]}{n^{1/3}}\right)_{n \in \N}.$  


\subsection{The framework of mod-Gaussian convergence}\label{introduction: mod-Gauss}
The framework of mod-Gaussian convergence for a sequence of random variables has been developed by Delbaen, F\'eray, Jacod, Kowalski, M\'eliot and Nikeghbali  in~\cite{FMN16, KN12, JKN11, DKN15, FMN17}.
For $-\infty \leq c < 0 < d\leq \infty$, set
\begin{align*}
S_{(c,d)}=\{z\in\C, c < \Re (z) < d\}.
\end{align*}

\begin{definition}  Let $\left(X_n\right)_{n\in\N}$ be a sequence of real valued random variables, and $\phi_n(z)=\E\left[e^{zX_n}\right]$ be their moment generating functions, which we assume to all exist in a strip $S_{c,d}$. 
 We assume that there exists an analytic function $\psi(z)$ not vanishing on the real part of $S_{(c,d)}$, such that locally uniformly on $S_{(c,d)}$, 
\begin{align*}
\lim_{n\to\infty}\E\left[e^{zX_n}\right]e^{-t_n\frac{z^2}{2}}:=\lim_{n\to\infty}\psi_n(z)=\psi(z),
\end{align*}
where $\left(t_n\right)_{n\in\N}$ is some sequence going to infinity. We then say that 
 $\left(X_n\right)_{n\in\N}$ converges mod-Gaussian on $S_{(c,d)},$ with parameters $t_n$ and limiting function $\psi$.
\end{definition}
To establish the mod-Gaussian convergence in two charge models, the following theorem will be useful. It is a straightforward consequence of Theorem 8.2.1 in \cite{FMN16}.

\begin{theorem}
\label{newcriterion}
Let $\left(X_n\right)_{n\in\N}$ be a sequence of bounded random variables with non-negative integer values, with mean $\mu_n,$ variance $\sigma_n^2$ and third cumulant $\kappa^{(3)}_n$. 
Suppose that the probability generating function $P_n(t)$ of $X_n$ can be factorized as
\begin{align*}
P_n(t) = \prod_{1 \leq j \leq n} P_{n,j} (t)
\end{align*}
where each $P_{n,j}$ is a polynomial with non-negative coefficients and maximal degree not depending on $n$. If
\begin{align}
\label{condcumulant}
\frac{\sigma_n^2}{n} \rightarrow  \sigma^2  > 0 \ , \ \frac{\kappa^{(3)}_n}{n} \rightarrow \kappa \,,
\end{align}
then the sequence
\begin{align*}
\tilde{X}_n : = \frac{X_n -\mu_n}{n^{1/3}}
\end{align*}
converges in the mod-Gaussian sense with limiting function $\psi(z) = e^{ \frac{\kappa z^3 }{6}}$
and parameter 
$t_n = \sigma_n^2 n^{-2/3}$.
The convergence takes place of the whole complex plane.
\end{theorem}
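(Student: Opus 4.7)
The plan is to invoke Theorem 8.2.1 of \cite{FMN16}, which converts uniform bounds on the cumulants of a sequence of random variables into mod-Gaussian convergence on the whole complex plane. The polynomial factorization hypothesis together with the bounded-degree and non-negative coefficient conditions is precisely tailored to produce such cumulant bounds, so the proof essentially reduces to verifying them.

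First I would use the factorization $P_n(t) = \prod_{j=1}^n P_{n,j}(t)$ to split the cumulant generating function. Since $\E[e^{zX_n}] = P_n(e^z) = \prod_j P_{n,j}(e^z)$, taking logarithms yields
\begin{align*}
\log \E\!\left[e^{zX_n}\right] = \sum_{j=1}^n \log P_{n,j}(e^z),
\end{align*}
so that the cumulants of $X_n$ add: $\kappa^{(r)}_n = \sum_{j=1}^n \kappa^{(r)}_{n,j}$. Each ratio $P_{n,j}(t)/P_{n,j}(1)$ is the probability generating function of an auxiliary random variable $Y_{n,j}$ taking values in $\{0,1,\ldots,d\}$, where $d$ is the uniform bound on $\deg P_{n,j}$; the non-negativity of the coefficients guarantees $Y_{n,j}$ is a genuine probability distribution. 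Since $Y_{n,j}$ is uniformly bounded, every one of its cumulants satisfies $|\kappa^{(r)}_{n,j}| \leq r!\,K^r$ for a constant $K$ depending only on $d$ (the standard quantitative estimate for cumulants of compactly supported laws). Summing gives the key bound $|\kappa^{(r)}_n| \leq n\, r!\, K^r$, valid uniformly in $r$ and $n$.

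Next I would expand the cumulant generating function of $\tilde X_n = (X_n - \mu_n)/n^{1/3}$ and subtract the Gaussian parameter:
\begin{align*}
\log \E\!\left[e^{z\tilde X_n}\right] - \frac{t_n z^2}{2} \;=\; \sum_{r\geq 3} \frac{\kappa^{(r)}_n}{r!}\cdot\frac{z^r}{n^{r/3}},
\end{align*}
where the choice $t_n = \sigma_n^2/n^{2/3}$ cancels exactly the $r=2$ contribution. The $r=3$ term is $\kappa^{(3)}_n z^3/(6n)$, which by \eqref{condcumulant} converges to $\kappa z^3/6$. Each tail term with $r\geq 4$ is dominated by $(K|z|)^r/n^{r/3-1}$; hence on any disk $\{|z|\leq R\}$ in $\C$ the entire tail $\sum_{r\geq 4}$ is majorised by a geometric series in $R/n^{1/3}$ that vanishes as $n\to\infty$. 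This yields the limit $\psi(z) = e^{\kappa z^3/6}$ locally uniformly on all of $\C$, as required.

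The main obstacle is establishing the uniform cumulant estimate $|\kappa^{(r)}_{n,j}| \leq r! K^r$: the individual polynomials $P_{n,j}$ may vary considerably in $j$ and $n$, yet $K$ must depend only on the maximal degree $d$ and not on the particular distributions. This is precisely the reason the theorem restricts to polynomial factors of bounded degree, and the reason non-negativity of the coefficients is demanded (so that one genuinely has a probability law supported in $\{0,1,\ldots,d\}$ to estimate). Once this bound is in hand, the rest of the argument is a mechanical expansion.
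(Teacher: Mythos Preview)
Your proposal is correct and aligns with the paper's approach: the paper states the theorem as ``a straightforward consequence of Theorem 8.2.1 in \cite{FMN16}'' without further detail, and your argument is precisely an unpacking of that machinery---deriving uniform cumulant bounds $|\kappa^{(r)}_n| \leq n\, r!\, K^r$ from the bounded-degree factorization and then controlling the tail of the cumulant expansion of $\tilde X_n$. Your identification of the cumulant estimate as the crux is apt; note that the paper later (in the proof of Corollary~\ref{cors}) invokes the variant bound $|\kappa^{(r)}| \leq n\, r^{r-2}\, C^r$ from Theorems 9.1.6--9.1.7 of \cite{FMN16}, which serves the same purpose.
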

\begin{remark}
\label{smallo}
If $\sigma_n^2 = n\sigma^2 + o(n^{2/3})$, we can take $t_n = \sigma^2 n^{1/3}$.
\end{remark}

Although mod-Gaussian convergence entails
much more information, for the aim of this paper, it will be an instrument to deduce asymptotic results, such as a central limit theorem, precise deviations etc.
In what follows we summarize some of the limiting results that mod-Gaussian convergence implies (see \cite{FMN16}, \cite{FMN17} for the results below). In the remaining part of this subsection, $\left(X_n\right)_{n\in\N}$ is a sequence converging mod-Gaussian on $S_{(c,d)},$ with parameters $t_n$ and limiting function $\psi$. 
\begin{theorem}[Precise deviations at the scale $O(t_n)$, Theorem 4.2.1 in \cite{FMN16}]\ \label{ldp}\\
For $x\in (0,d)$,
\begin{equation*}
\P\left[X_n\ge t_nx\right]=\frac{e^{-t_n\frac{x^2}{2}}}{x\sqrt{2\pi t_n}}\psi(x)\left(1+o(1)\right),
\end{equation*}
and for $x\in(c,0)$,
\begin{equation*}
\P\left[X_n\le t_nx\right]=\frac{e^{-t_n\frac{x^2}{2}}}{|x|\sqrt{2\pi t_n}}\psi(x)\left(1+o(1)\right).
\end{equation*}
\end{theorem}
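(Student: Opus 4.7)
The natural route is the classical exponential tilt (Esscher transform / Cram\'er change of measure), adapted to the mod-Gaussian setting where the prefactor $\psi(x)$ records the non-Gaussian contribution. I treat the case $x\in(0,d)$; the case $x\in(c,0)$ is completely analogous (apply the same argument to $-X_n$, which converges mod-Gaussian on $S_{(-d,-c)}$ with limiting function $z\mapsto \psi(-z)$).

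The first step is to introduce, for fixed $x\in(0,d)$, the tilted measure $\tilde{\P}_n$ defined by $d\tilde{\P}_n/d\P = e^{xX_n}/\phi_n(x)$, which is well-defined since $x$ lies in the strip of convergence. A direct rewriting gives
\begin{equation*}
\P\left[X_n\ge t_n x\right]
=\phi_n(x)\,e^{-x\,t_n x}\;\tilde{\E}\!\left[e^{-x(X_n-t_n x)}\mathbf 1_{X_n\ge t_n x}\right].
\end{equation*}
The mod-Gaussian hypothesis evaluated at $z=x$ yields $\phi_n(x)=e^{t_n x^2/2}\psi(x)(1+o(1))$, so the prefactor collapses to $e^{-t_n x^2/2}\psi(x)(1+o(1))$. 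It remains to show that the residual expectation is asymptotic to $1/(x\sqrt{2\pi t_n})$, which is exactly the contribution one obtains by integrating $e^{-xy}$ against the Gaussian density with variance $t_n$ on the half-line $y\ge 0$.

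To justify this, I would study the shifted variable $Y_n:=X_n-t_n x$ under $\tilde{\P}_n$. A short calculation from mod-Gaussian convergence gives, locally uniformly in $z$ in a neighborhood of $0$,
\begin{equation*}
\tilde{\E}\!\left[e^{zY_n}\right]
=\frac{\phi_n(x+z)}{\phi_n(x)}\,e^{-z t_n x}
=e^{t_n z^2/2}\,\frac{\psi(x+z)}{\psi(x)}\,(1+o(1)),
\end{equation*}
so $Y_n/\sqrt{t_n}$ is asymptotically standard Gaussian in the sense of Laplace transforms, with a smooth non-Gaussian correction of order $1$. The desired estimate
\begin{equation*}
\tilde{\E}\!\left[e^{-xY_n}\mathbf 1_{Y_n\ge 0}\right]\sim\frac{1}{x\sqrt{2\pi t_n}}
\end{equation*}
should then follow by changing variables to $u=Y_n/\sqrt{t_n}$, noting that the integrand $e^{-x\sqrt{t_n}\,u}$ forces the mass to concentrate in a window of width $1/(x\sqrt{t_n})$ near $u=0^{+}$, on which the density of $Y_n/\sqrt{t_n}$ under $\tilde{\P}_n$ is $\phi(0)+o(1)=1/\sqrt{2\pi}+o(1)$.

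The main obstacle is exactly this last step: weak convergence of $Y_n/\sqrt{t_n}$ to $\mathcal N(0,1)$ is not enough, because one needs to integrate an exponentially decaying function at the very boundary of the support, so a local (density-level) statement is required. This is where the full strength of mod-Gaussian convergence, namely uniform control of the Laplace transform on a complex strip, becomes essential: it allows one either to invoke a Berry-Esseen / local limit refinement for the tilted laws, or more directly to evaluate the relevant integral by writing
\begin{equation*}
\tilde{\E}\!\left[e^{-xY_n}\mathbf 1_{Y_n\ge 0}\right]=\frac{1}{2\pi i}\int_{\Gamma}\frac{\tilde{\E}[e^{-(x-i\xi)Y_n}]}{x-i\xi}\,d\xi
\end{equation*}
on an appropriate contour $\Gamma$, and then applying the saddle-point / Laplace method using the asymptotic $\tilde{\E}[e^{zY_n}]\sim e^{t_n z^2/2}\psi(x+z)/\psi(x)$. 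Either route delivers the stated equivalent, and combining with the prefactor from Step~1 gives the result.
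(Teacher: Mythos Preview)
The paper does not prove this statement at all: Theorem~\ref{ldp} is quoted verbatim from \cite{FMN16} (Theorem~4.2.1 there) as part of the background toolbox, with the proof deferred entirely to that reference. There is therefore no ``paper's own proof'' to compare against.

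That said, your sketch is the correct strategy and is essentially the one carried out in \cite{FMN16}: exponential tilting at the point $x$, followed by a local analysis of the tilted law near $0$. You have correctly isolated the genuine difficulty, namely that weak convergence of $Y_n/\sqrt{t_n}$ under $\tilde{\P}_n$ is insufficient and one needs control at the density level in a shrinking window. In \cite{FMN16} this is handled by a Fourier-analytic argument (Parseval / contour shift) exploiting the locally uniform convergence of $\psi_n$ on the strip, which is closer to your second suggested route than to a Berry--Esseen bound. Your contour-integral formula is not quite literally correct as written (the identity $\mathbf 1_{y\ge 0}=\frac{1}{2\pi i}\int \frac{e^{i\xi y}}{x-i\xi}\,d\xi\cdot e^{xy}$ needs care with the contour and with interchanging integration and expectation), but the idea is right and can be made rigorous along the lines of \cite{FMN16}, Section~4.2.
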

\vskip 10pt
\begin{theorem}[Central limit theorem at the scale $o(t_n)$, Theorem 4.3.1 in \cite{FMN16}]\ \\
\label{clt}
For  $y=o\left(\sqrt{t_n}\right)$,
\begin{equation*}
\P\left[\frac{X_n}{\sqrt{t_n}}\ge y\right]=\P\left[
\mathcal{N}(0,1)\ge y\right]\left(1+o(1)\right)=\frac{e^{-\frac{y^2}{2}}}{y\sqrt{2\pi}}\left(1+o(1)\right).
\end{equation*}
\end{theorem}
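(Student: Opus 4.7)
My strategy is the standard exponential tilt followed by an application of Mills' asymptotic; the mod-Gaussian hypothesis enters as a precise replacement for a moment generating function estimate. Because $y=o(\sqrt{t_n})$, the tilt height $h_n:=y/\sqrt{t_n}$ eventually lies in $(c,d)\cap\R$, so $\phi_n(h_n):=\E[e^{h_nX_n}]$ is finite. Let $\tilde{\P}_n$ be the tilted law with $d\tilde{\P}_n/d\P=e^{h_nX_n}/\phi_n(h_n)$, and set $S_n:=(X_n-t_nh_n)/\sqrt{t_n}$. A direct computation based on $X_n=t_nh_n+\sqrt{t_n}\,S_n$ gives
\begin{equation*}
\P\!\left[X_n\ge y\sqrt{t_n}\right]=\phi_n(h_n)\,e^{-t_nh_n^2}\,\E_{\tilde{\P}_n}\!\left[e^{-yS_n}\mathbf{1}_{\{S_n\ge 0\}}\right].
\end{equation*}

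The mod-Gaussian hypothesis $\phi_n(z)=e^{t_nz^2/2}\psi_n(z)$ (with $\psi_n\to\psi$ locally uniformly and $\psi(0)=1$, as seen by setting $z=0$ in the definition) immediately simplifies the prefactor to $e^{-y^2/2}\psi_n(h_n)=e^{-y^2/2}(1+o(1))$. The same hypothesis also controls the tilted law: for complex $u$ in a compact,
\begin{equation*}
\E_{\tilde{\P}_n}[e^{uS_n}]=e^{u^2/2}\,\frac{\psi_n(h_n+u/\sqrt{t_n})}{\psi_n(h_n)}\xrightarrow[n\to\infty]{}e^{u^2/2}
\end{equation*}
locally uniformly in $u$, so $S_n\Rightarrow\mathcal{N}(0,1)$ under $\tilde{\P}_n$.

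For $y$ remaining bounded, weak convergence plus uniform integrability (available thanks to mgf convergence on a neighborhood of $0$) yield
\begin{equation*}
\E_{\tilde{\P}_n}\!\left[e^{-yS_n}\mathbf{1}_{\{S_n\ge 0\}}\right]\to \E\!\left[e^{-yZ}\mathbf{1}_{\{Z\ge 0\}}\right]=e^{y^2/2}\bar\Phi(y),\qquad Z\sim\mathcal{N}(0,1),
\end{equation*}
hence $\P[X_n\ge y\sqrt{t_n}]=\bar\Phi(y)(1+o(1))$, which is the CLT. For $y\to\infty$, Mills' asymptotic $\bar\Phi(y)\sim e^{-y^2/2}/(y\sqrt{2\pi})$ reduces the claim to
\begin{equation*}
y\,\E_{\tilde{\P}_n}\!\left[e^{-yS_n}\mathbf{1}_{\{S_n\ge 0\}}\right]\longrightarrow\frac{1}{\sqrt{2\pi}}.
\end{equation*}
Letting $\tilde\mu_n$ denote the law of $S_n$ under $\tilde{\P}_n$ and substituting $u=ys$, the left-hand side becomes $\int_0^\infty e^{-u}\,d\tilde\mu_n(u/y)$; splitting at $u=\delta y$ for a small fixed $\delta>0$, the tail piece is at most $e^{-\delta y}$ which is exponentially smaller than $1/y$, while on the main piece $u/y\to 0$, so that only a shrinking neighborhood of $0$ contributes and a local density estimate identifies the limit as $\phi(0)=1/\sqrt{2\pi}$.

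\textbf{Main obstacle.} The delicate step is precisely this last one: weak convergence of $\tilde\mu_n$ to $\mathcal{N}(0,1)$ is not by itself enough, since we probe $\tilde\mu_n$ on a window of width $1/y\to 0$. The remedy is a Berry-Esseen-type smoothing argument applied to the characteristic function $\E_{\tilde{\P}_n}[e^{i\xi S_n}]=e^{-\xi^2/2}\psi_n(h_n+i\xi/\sqrt{t_n})/\psi_n(h_n)$; the analyticity of $\psi_n$ in a complex strip provided by the mod-Gaussian hypothesis gives the control on large Fourier intervals needed to deduce local density convergence near $0$. This is the saddle-point/Fourier-inversion ingredient underlying Theorem 4.3.1 of \cite{FMN16}.
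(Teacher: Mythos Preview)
The paper does not give a proof of this statement; it simply quotes it as Theorem 4.3.1 of \cite{FMN16}. So there is nothing in the paper to compare your argument against.

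That said, your outline is the standard exponential-tilt route used in \cite{FMN16}, and the computations you wrote down (the factorisation of $\P[X_n\ge y\sqrt{t_n}]$, the identification $\phi_n(h_n)e^{-t_nh_n^2}=e^{-y^2/2}\psi_n(h_n)$, and the tilted mgf formula $\E_{\tilde\P_n}[e^{uS_n}]=e^{u^2/2}\psi_n(h_n+u/\sqrt{t_n})/\psi_n(h_n)$) are all correct. The bounded-$y$ case is complete as you wrote it. You are also right that the only genuine difficulty is the regime $y\to\infty$, where one needs more than weak convergence of $S_n$ under $\tilde\P_n$: one needs a quantitative local estimate near $0$, and this is exactly where the analyticity of $\psi_n$ on a strip (hence control of the tilted characteristic function on Fourier intervals of length $\asymp y$) is used in \cite{FMN16}. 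Your identification of the obstacle and the remedy is accurate; turning the last paragraph into a full proof is essentially reproducing the Fourier-inversion argument of that reference.
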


Note that Theorem \ref{clt} immediately implies a \emph{central limit theorem} for the rescaled sequence $\left(\frac{X_n}{\sqrt{t_n}}\right)_{n\in\N}$ (taking $y=O(1)$).\\
Under additional assumptions, it is possible to deduce the speed of convergence of the above mentioned CLT and a local limit theorem as well. 

\begin{definition}
Let $\left(X_n\right)_{n\in\N}$ be a sequence of real random variables, and $\left(t_n\right)_{n \in \N}$ a sequence growing to infinity. Consider the following assertions:
\begin{enumerate}[label=(\textbf{Z\arabic*}),ref=(Z\arabic*)]
\item \label{Z1} Fix $v, w > 0$ and $\gamma\in \R$.  There exists a zone $[-Dt_n^\g,Dt_n^\g]$, $D>0$, such that, for all
$\xi$ in this zone, if $\psi_n(i\xi) = \E\left[e^{i \xi X_n} \right] e^{\frac{t_n \xi^2}{2}}$, then 
\begin{align*}
\left|\psi_n(i\xi)-1\right| \leq K_1|\xi|^v e^{K_2 |\xi|^w} 
\end{align*}
for some positive constants $K_1$ and $K_2$, that are independent of $n$.
\item \label{Z2} One has 
\begin{equation}
w \geq 2;\qquad -\half \le\gamma\le\frac{1}{w-2};\qquad D\le \left(\frac{1}{4K_2}\right)^{\frac{1}{w-2}}. \nonumber
\end{equation}
\end{enumerate}
If Conditions \ref{Z1} and \ref{Z2} are satisfied, we say that we have a zone of control $[-Dt_n^\g,Dt_n^\g]$ with index $(v,w)$. 
\end{definition}

\begin{theorem}[Speed of convergence, Theorem 2.16 in \cite{FMN17}]\ \label{speed}\\
Let $\left(X_n\right)_{n\in\N}$ be  a sequence converging in mod-Gaussian sense with a zone of control $[-Dt_n^\g,Dt_n^\g]$ of index $(v,w)$. In addition, we assume $\g\le \frac{v-1}{2}$. Then,
\begin{equation*}
d_{Kol}\left(\frac{X_n}{\sqrt{t_n}}, \mathcal{N}(0,1)\right)\le C(D, v,K_1)\frac{1}{t_n^{\gamma+\half}},
\end{equation*}
where $d_{Kol}(\cdot, \cdot)$ is the Kolmogorov distance and 
\begin{equation}
\label{defc}
C(D,v,K_1) = \min_{\lambda >0} \left(\frac{1+\lambda}{\sqrt{2} \pi} \left(2^{v-\half} \G\left(\frac{v}{2} \right)K_1 + \frac{\pi^
{1/6}}{D} \left(4 \sqrt[3]{1+\frac{1}{\l}} + 3\sqrt[3]{3}\right) \right)\right).
\end{equation}
\end{theorem}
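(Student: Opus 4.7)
The approach is a direct Esseen-type smoothing argument, comparing the characteristic function of $X_n/\sqrt{t_n}$ to that of a standard Gaussian, with the two mod-Gaussian ingredients \ref{Z1}-\ref{Z2} inserted to control the difference on the zone. Inside the zone of control, the definition of $\psi_n$ rewrites
\begin{equation*}
\E\!\left[e^{i\xi X_n/\sqrt{t_n}}\right] - e^{-\xi^2/2} = e^{-\xi^2/2}\bigl(\psi_n(i\xi/\sqrt{t_n}) - 1\bigr),
\end{equation*}
and \ref{Z1} immediately gives, for $|\xi| \le T_n := D t_n^{\gamma + \half}$,
\begin{equation*}
\bigl|\E[e^{i\xi X_n/\sqrt{t_n}}] - e^{-\xi^2/2}\bigr| \le K_1 t_n^{-v/2} |\xi|^v e^{-\xi^2/2} \exp\!\bigl(K_2 t_n^{-w/2} |\xi|^w\bigr).
\end{equation*}

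The decisive elementary step is to show that the disruptive exponential on the right is absorbed by the Gaussian factor. Bounding $|\xi|^w \le T_n^{w-2} \xi^2$ on the zone gives
\begin{equation*}
K_2 t_n^{-w/2} |\xi|^w \le K_2 D^{w-2} t_n^{\gamma(w-2)-1} \xi^2,
\end{equation*}
and the two conditions of \ref{Z2}, namely $\gamma(w-2) \le 1$ and $K_2 D^{w-2} \le 1/4$, are calibrated precisely so that this bound is $\le \xi^2/4$. Consequently, on the whole zone,
\begin{equation*}
\bigl|\E[e^{i\xi X_n/\sqrt{t_n}}] - e^{-\xi^2/2}\bigr| \le K_1 t_n^{-v/2} |\xi|^v e^{-\xi^2/4}.
\end{equation*}

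Then I would invoke a refined Esseen smoothing inequality with cutoff $T_n$ and free parameter $\lambda > 0$,
\begin{equation*}
d_{Kol}\!\left(\frac{X_n}{\sqrt{t_n}}, \mathcal{N}(0,1)\right) \le \frac{1+\lambda}{\pi} \int_0^{T_n} \frac{\bigl|\phi_{X_n/\sqrt{t_n}}(\xi) - e^{-\xi^2/2}\bigr|}{\xi}\, d\xi + \frac{R(\lambda)}{T_n},
\end{equation*}
where the remainder constant $R(\lambda)$ is the source of the $\pi^{1/6}$ and cube-root terms in \eqref{defc}, arising from smoothing against a trapezoidal/Beurling--Selberg kernel. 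Substituting the pointwise bound, the integral reduces to the Gaussian moment $K_1 t_n^{-v/2} \int_0^\infty \xi^{v-1} e^{-\xi^2/4}\, d\xi = 2^{v-1} \Gamma(v/2) K_1 t_n^{-v/2}$, which after rearrangement gives the main term in \eqref{defc}. The remainder $R(\lambda)/T_n$ gives the second contribution with its $1/D$ prefactor.

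Finally, the extra hypothesis $\gamma \le (v-1)/2$ translates to $v/2 \ge \gamma + \half$, so $t_n^{-v/2} \le t_n^{-\gamma - \half}$, and both contributions combine to a single bound of order $t_n^{-\gamma - \half}$. Minimizing the resulting expression over $\lambda > 0$ yields $C(D,v,K_1)$. The main obstacle is not the mod-Gaussian algebra but the quantitative smoothing step: one needs the precise Esseen inequality whose remainder produces the cube-root constants, which is a delicate implementation of Beurling--Selberg machinery (the optimization over $\lambda$ reflecting the trade-off between the main term and the smoothing residual). Once that tool is in place, the remaining steps---the Gaussian integration and the algebraic check using \ref{Z2}---are routine.
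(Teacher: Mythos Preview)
The paper does not prove this statement at all: Theorem~\ref{speed} is quoted verbatim from \cite{FMN17} (Theorem~2.16 there) as a black-box tool, with no proof given or even sketched. So there is no ``paper's own proof'' to compare your proposal against.

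That said, your outline is an accurate reconstruction of the argument in \cite{FMN17}. The decomposition via a Berry--Esseen/Beurling--Selberg smoothing inequality with free parameter $\lambda$, the absorption of the factor $e^{K_2 t_n^{-w/2}|\xi|^w}$ into $e^{-\xi^2/4}$ on the zone using the two calibrations in \ref{Z2} (your algebra $|\xi|^w \le T_n^{w-2}\xi^2$ leading to the exponent $\gamma(w-2)-1\le 0$ and the bound $K_2 D^{w-2}\le 1/4$ is exactly right), the Gaussian moment $\int_0^\infty \xi^{v-1}e^{-\xi^2/4}\,d\xi = 2^{v-1}\Gamma(v/2)$, and the final use of $\gamma \le (v-1)/2$ to merge $t_n^{-v/2}$ with the remainder rate $t_n^{-\gamma-1/2}$ --- all of this matches \cite{FMN17}. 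The only part you leave as a reference is the precise smoothing lemma producing the constants $\pi^{1/6}$, $4\sqrt[3]{1+1/\lambda}$ and $3\sqrt[3]{3}$; that is indeed the technical core, and it is carried out in \cite{FMN17} via an explicit choice of smoothing kernel, not reproduced in the present paper.
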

\begin{theorem}[Local limit theorem, see \cite{BMN17}]\ \label{llt}\\
 Let $x\in\R$ and $(a,b)$ be a fixed interval, with $a<b$. 
 Assume that conditions (Z1) and (Z2) hold. Then for every exponent $\d\in\left(0,\g+\frac{1}{2}\right)$, 
 \begin{equation*}
\lim_{n\to\infty}(t_n)^{\d}\P\left[\frac{X_n}{\sqrt{t_n}}-x\in\frac{1}{t_n^\d} (a,b)\right]=\frac{b-a}{\sqrt{2\pi}}.
\end{equation*}
In particular, assuming $\g>0$, with $\d=\half$ one obtains
 \begin{equation*}
\lim_{n\to\infty}(t_n)^{\half}\P\left[X_n-x\left(t_n\right)^\half \in (a,b)\right]=\frac{b-a}{\sqrt{2\pi}}.
\end{equation*}
\end{theorem}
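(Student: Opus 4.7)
My plan is to use the standard Fourier inversion (Parseval identity) method for local limit theorems in the mod-Gaussian framework, following \cite{BMN17, FMN17}. The heuristic is that inside the zone of control the characteristic function of $X_n$ is close to that of a centred Gaussian of variance $t_n$; Fourier-inverting against a window of width $t_n^{-\delta}$ then reproduces the local Gaussian density at $x$, provided $\delta<\gamma+\half$ so that the relevant frequency range $|\eta|\le t_n^\delta$ stays strictly inside the zone where the mod-Gaussian approximation is under control.

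Concretely, set $Y_n=X_n/\sqrt{t_n}$ and sandwich $\mathbf 1_{(a,b)}$ between Beurling--Selberg majorants/minorants $h^\pm$ whose Fourier transforms $\widehat{h^\pm}$ are compactly supported in $[-M,M]$ and whose integrals differ from $b-a$ by a quantity vanishing as $M\to\infty$. For such $h$, Fourier inversion together with the mod-Gaussian decomposition $\E[e^{i\zeta X_n}]=\psi_n(i\zeta)\,e^{-t_n\zeta^2/2}$ and the change of variable $\eta=\xi t_n^\delta$ give
\[
t_n^\delta\,\E\!\left[h(t_n^\delta(Y_n-x))\right]=\frac{1}{2\pi}\int_{\R}\widehat h(\eta t_n^{-\delta})\,e^{-i\eta x}\,\psi_n(i\eta/\sqrt{t_n})\,e^{-\eta^2/2}\,d\eta.
\]
Because $\widehat h$ is supported in $[-M,M]$, the integrand vanishes outside $|\eta|\le M t_n^\delta$, which by $\delta<\gamma+\half$ lies inside the zone of control $|\eta/\sqrt{t_n}|\le Dt_n^\gamma$ for all large $n$; consequently \ref{Z1}--\ref{Z2} yield a uniform bound on $|\psi_n(i\eta/\sqrt{t_n})|$, and $e^{-\eta^2/2}$ then provides an integrable majorant. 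Pointwise $\widehat h(\eta t_n^{-\delta})\to\widehat h(0)=\int h$ and $\psi_n(i\eta/\sqrt{t_n})\to 1$, so by dominated convergence and the identity $\int_{\R} e^{-i\eta x}e^{-\eta^2/2}\,d\eta=\sqrt{2\pi}\,e^{-x^2/2}$ one obtains the limit $(\int h/\sqrt{2\pi})\,e^{-x^2/2}$; squeezing $h^+$ against $h^-$ and sending $M\to\infty$ extracts the stated constant.

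The main obstacle, in my view, is securing the uniform integrable domination in the display above: for $\zeta=\eta/\sqrt{t_n}$ reaching out to $Dt_n^\gamma$ inside the zone of control, the estimate $|\psi_n(i\zeta)-1|\le K_1|\zeta|^v e^{K_2|\zeta|^w}$ from \ref{Z1} could a priori grow exponentially in $t_n^{w\gamma}$. One must exploit the precise calibration in \ref{Z2} --- namely $w\ge 2$, $\gamma\le 1/(w-2)$, and the explicit ceiling $D\le (4K_2)^{-1/(w-2)}$ --- to show that this potential blow-up is absorbed by the Gaussian damping factor $e^{-\eta^2/2}$ uniformly in $n$. Once that integrable domination is established, the remainder is textbook Fourier analysis and a routine Beurling--Selberg sandwich limit, so all the real technical content lives in verifying that the zone of control is wide enough relative to the window exponent $\delta$.
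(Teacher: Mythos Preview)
The paper does not supply its own proof of this theorem: it is quoted with the reference ``see \cite{BMN17}'' and no \texttt{proof} environment follows. Your sketch is precisely the argument used in that reference and in \cite{FMN17}: Fourier inversion against Beurling--Selberg majorants/minorants of compact spectral support, the change of variable $\eta=\xi\, t_n^\delta$, and dominated convergence with the Gaussian factor $e^{-\eta^2/2}$ absorbing the growth of $\psi_n$ on the zone of control thanks to the calibration $D\le(4K_2)^{-1/(w-2)}$ in \ref{Z2}. So there is nothing to contrast; you have reconstructed the intended proof.

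One remark worth recording: your computation correctly yields the limit $\dfrac{b-a}{\sqrt{2\pi}}\,e^{-x^2/2}$, whereas the statement as printed in this paper omits the factor $e^{-x^2/2}$. That is a typographical slip in the paper (the Gaussian density at the centring point $x$ must appear whenever $x\neq 0$); your final sentence ``extracts the stated constant'' glosses over this mismatch, so make explicit that the stated formula is only correct at $x=0$, or equivalently restore the missing Gaussian factor.
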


\section{The limiting results  for the two-charge ensemble on the line} \label{results: line}
\subsection{The fugacity parameter increasing with $n$}
In this subsection we establish various asymptotic results in the case $X=\sqrt{2n\g}.$ The first theorem is the large deviation principle for the random sequence $\left(U_n\right)_{n \in \N}$ (as before $U_n$ is the number of particles of unit charge). We use the notation of Dembo and Zeitouni (see \cite{DZ10}). In particular, we denote the limiting cumulant generating function and its Legendre-Fenchel transform, by $\L$ and $\L^*$, respectively.
\begin{theorem}[LDP]\ \label{LDP_L_ng}\\
The sequence of random variables $(U_n /2n)_{n\geq 1}$ satisfies the LDP in $(0,1)$ at speed $2n$ with good rate function 
\begin{equation}
\label{reatef}
\L^*(x) =  x \log x+ \frac{1-x}{2}\log (1-x) + a(\gamma) x + b(\gamma)\, 
\end{equation}
where $a(\gamma) = -\half (1 + \log(2\gamma))$ and $b(\gamma)$ is such that $\L^*(\sqrt{\gamma^2 + 2\gamma} -\gamma) =0$.
\end{theorem}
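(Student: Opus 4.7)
The natural plan is to apply the Gärtner–Ellis theorem to the rescaled sequence $U_n/(2n)$ at speed $2n$. Since the moment generating function is given explicitly by \eqref{2cl_mom_gen} with $X=\sqrt{2n\gamma}$, I would first reduce the problem to computing
\begin{equation*}
\Lambda(z) \;=\; \lim_{n\to\infty}\frac{1}{2n}\log\E\!\left[e^{z U_n}\right] \;=\; \tfrac{1}{2}\bigl[g(\gamma e^{2z})-g(\gamma)\bigr],\qquad g(\gamma):=\lim_{n\to\infty}\frac{1}{n}\log L_n^{(-1/2)}(-2n\gamma).
\end{equation*}
The existence and smoothness of $\Lambda$ then follow from the existence and smoothness of $g$ on $(0,\infty)$.

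The crucial step is the asymptotic evaluation of $g(\gamma)$. I would use the explicit series expansion
\begin{equation*}
L_n^{(-1/2)}(-2n\gamma) \;=\; \sum_{k=0}^{n}\binom{n-\tfrac{1}{2}}{\,n-k\,}\frac{(2n\gamma)^k}{k!},
\end{equation*}
all of whose terms are positive, and apply the Laplace method with $k=xn$, $x\in[0,1]$, together with Stirling's formula. A short calculation shows that the exponential rate of the summand equals $F(x,\gamma)=x\log(2\gamma)+x-2x\log x-(1-x)\log(1-x)$, so that
\begin{equation*}
g(\gamma)\;=\;\max_{x\in[0,1]} F(x,\gamma),
\end{equation*}
with unique maximizer $x^*(\gamma)$ solving $2\gamma(1-x)=x^2$, i.e.\ $x^*(\gamma)=\sqrt{\gamma^2+2\gamma}-\gamma$. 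The $\alpha=-1/2$ index contributes only subexponential corrections and is harmless at this scale. This Laplace/saddle-point input is the main technical obstacle, and care has to be taken to justify uniform control on compacts in $z$ in order to extract a well-defined analytic limit $\Lambda(z)$.

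Once $g$ is known, the envelope theorem yields $g'(\gamma)=x^*(\gamma)/\gamma$, hence
\begin{equation*}
\Lambda'(z)\;=\;x^*\!\bigl(\gamma e^{2z}\bigr),
\end{equation*}
which is $C^\infty$, strictly increasing, and maps $\R$ onto $(0,1)$. Thus $\Lambda$ is finite and differentiable on all of $\R$ and essentially smooth, so the Gärtner–Ellis theorem applies and gives the LDP for $U_n/(2n)$ on $(0,1)$ at speed $2n$ with rate function $\Lambda^*(x)=\sup_{z\in\R}(xz-\Lambda(z))$.

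It remains to compute $\Lambda^*$ explicitly. The supremum is attained at $z(x)=\log x-\tfrac{1}{2}\log\bigl(2\gamma(1-x)\bigr)$, i.e.\ when $x^*(\gamma e^{2z})=x$. Substituting this back into the formula for $F$ at its maximizer yields, after cancellation, the simple identity $g(\gamma e^{2z(x)})=x-\log(1-x)$. Plugging into $\Lambda^*(x)=xz(x)-\Lambda(z(x))$ and collecting terms produces
\begin{equation*}
\Lambda^*(x)\;=\;x\log x+\tfrac{1-x}{2}\log(1-x)-\tfrac{1}{2}\bigl(1+\log(2\gamma)\bigr)x+\tfrac{1}{2}g(\gamma),
\end{equation*}
which is exactly \eqref{reatef} with $a(\gamma)=-\tfrac{1}{2}(1+\log(2\gamma))$ and $b(\gamma)=\tfrac{1}{2}g(\gamma)$. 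The normalization $\Lambda^*(x^*(\gamma))=0$, automatic from Gärtner–Ellis at the law of large numbers point $x=\E[U_n]/(2n)\to x^*(\gamma)=\sqrt{\gamma^2+2\gamma}-\gamma$, then identifies $b(\gamma)$ through the stated condition, and goodness of the rate function follows from $\Lambda^*\to+\infty$ at the endpoints of $(0,1)$.
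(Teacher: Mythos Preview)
Your proof is correct and follows the same overall strategy as the paper (G\"artner--Ellis applied to $\frac{1}{2n}\log\E[e^{zU_n}]$), but the technical input you use to extract the Laguerre asymptotics is genuinely different. The paper rewrites $L_n(-X^2)$ via the Hermite--Laguerre relation and a Fourier representation of $H_{2n}$, arriving at an integral $I_N(\gamma)=\int_{-\infty}^\infty s^N e^{-N\gamma(2s+s^2)}\,ds$ to which the continuous Laplace method is applied; the resulting $\Lambda$ is then reparametrized by $t=t_-^*(\gamma e^{2z})$, and the Legendre dual is computed through that change of variable. You instead work directly with the positive-term series $L_n^{(-1/2)}(-2n\gamma)=\sum_k\binom{n-1/2}{n-k}\frac{(2n\gamma)^k}{k!}$ and apply a discrete Laplace/Stirling argument in the summation index, identifying the maximizer $x^*(\gamma)=\sqrt{\gamma^2+2\gamma}-\gamma$ and using the envelope theorem to get $\Lambda'(z)=x^*(\gamma e^{2z})$.

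What each approach buys: the paper's integral representation ties in naturally with the Hermite machinery used elsewhere and gives a clean reparametrization for the later cumulant computations (Proposition~\ref{cvmom}); your series approach is more elementary, avoids the Hermite/Fourier detour, and makes the explicit form of $\Lambda^*$ and the identification $b(\gamma)=\tfrac12 g(\gamma)$ appear with less manipulation. Your variable $x$ is exactly the paper's $1/t$, so the two computations of $\Lambda^*$ match term by term. One small remark: the uniform-on-compacts control you flag is more than is needed here, since G\"artner--Ellis only requires pointwise convergence on $\R$; the caveat is harmless but not essential for the LDP itself.
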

\begin{proof}
We derive the above LDP as a consequence of the G\"{a}rtner-Ellis theorem (see Theorem 2.3.6 in \cite{DZ10}). Therefore, 
we need to estimate the limiting behavior of the normalized cumulant generating function $\L_n (z) = \log \E \left[e^{zU_n}\right]$ . We know that
\begin{align}
\label{LapUn}
\E\left[e^{zU_n}\right]  = \frac{L_n(-X^2 e^{2z})}{L_n(-X^2)}.
\end{align} 
For this propose,  we use the relation between Legendre and Hermite polynomials:
\[L_n (-X^2) = (-1)^n 2^{-2n}(n!)^{-1} H_{2n}(i X)\]
(\cite{Sz75} formula 5.6.1) and the representation by means of the Fourier transform
\begin{align*}
H_n(t) = \frac{e^{t^2}}{\sqrt{2\pi}} \int_{-\infty}^\infty (-is)^n e^{its-\frac{s^2}{4}}ds,
\end{align*}
to get
\begin{align}
\label{firstHer}
L_n(-X^2) = \frac{ (-1)^n 2^{-2n}(n!)^{-1}}{\sqrt{2\pi}}  e^{-X^2} \int_{-\infty}^\infty s^{2n} e^{-sX - \frac{s^2}{4} } ds.
\end{align}
With the change of variables $s \mapsto 2sX$, we obtain
\begin{align*}
L_n(-X^2) = 2\frac{ (-1)^n X^{2n+1}(n!)^{-1}}{\sqrt{2\pi}}  e^{-X^2} \int_{-\infty}^\infty s^{2n} e^{-2sX^2 - X^2s^2 } ds.
\end{align*}
Hence, since $N = 2n$ and $X^2 = 2n\gamma$
\begin{align}
\label{LapUn1}
\E\left[ e^{z U_n}\right] = e^{(N+1)z} e^{-N \gamma( e^{2z} - 1)} \frac{I_N (\gamma e^{2z})}{I_N (\gamma)},
\end{align}
where
\begin{align}
\label{firstHer1}
I_N (\gamma) =  \int_{-\infty}^\infty s^{N} e^{-N\gamma (2s + s^2) } ds\,.\end{align}
To apply the Laplace method, we split the integral in two parts:
\[I_N(\gamma) = I^+_N(\gamma) + I_N^-(\gamma)\, ,\]
with
\[I^\pm_N (\gamma) =  \int_{0}^\infty 
e^{-N g_\pm(\gamma, s)} ds, \quad g_\pm (\gamma, s) = 
-\log s \pm 2 \gamma s + \gamma s^2 
 \,.\]
Let $t_\pm^*(\gamma)$ be the positive solution of 
\begin{align*}
2\gamma t^2 \pm 2\gamma t-1 = 0\,.\end{align*}
The function $g_\pm$ is convex. Its minimum, reached at $t_\pm^*(\gamma)$, is
\[g_\pm (\gamma, t_\pm^*(\gamma)) = -\log t_\pm^*(\gamma) \pm 2\gamma  t_\pm^*(\gamma) + \gamma  t_\pm^*(\gamma)^2\,.\]
A simple look shows that
$g_-(\gamma, t_-^*(\gamma)) < g_+(\gamma, t_+^*(\gamma))$, so that
if we set 
\begin{align}
\label{defpm}
t(z) := t_-^*(\gamma e^{2z}) =\frac{1}{2}\left(1 + \sqrt{1+ \frac{2e^{-2z}}{\gamma}}\right)\,,\end{align}
we get, for every $z \in \R$,
\begin{align}
\label{cvcall}
\frac{1}{2n} \L_n (z) 
\rightarrow
& z- \gamma(e^{2z} -1) - g_- (\gamma e^{2z} , t(z)) + g_- (\gamma , t(0)) =  \L(z)\,.
\end{align}
Let us reparametrize this function in terms of $t \in (1, \infty)$. From (\ref{defpm}) 
\begin{equation}
\label{repar}
z = - \half \log \left(2\gamma t (t-1)\right) \ , \ \frac{dz}{dt} = -\frac{2t-1}{2t(t-1)},\end{equation}
so that, if we denote $t(0) = t_0,$ we can write
\[\L(z) =  \widehat{\L}(t) -  \widehat{\L}(t_0),\]
where \[ \widehat{\L}(t) := - \half \log \left(1 - \frac{1}{t}\right) + \frac{1}{2t}\,.\]

It is clear  that $\widehat{\L}$ (resp. $\L$) is a differentiable function of $t$ (resp. $z$) on its domain $(1, \infty)$ (resp. $\R$).
It is then enough to apply the G\"{a}rtner-Ellis theorem.

Moreover, since $\L'(z) =1/t$
we see that \[\L^* (x) = \sup_{z \in \R} \left(zx - \L (z)\right)= 
\sup_{t \in (1, \infty)}  \left(-\frac{x}{2}\log \left(2\gamma t(t-1)\right) - \widehat{\L}(t) +  \widehat{\L}(t_0)\right)\]
and the supremum is reached in $t = 1/x$ so that 
\[\L^* (x) = x \log x+ \frac{1-x}{2}\log (1-x) -\frac{x}{2}(1 + \log(2\gamma)) + \widehat{\L}(t_0)\,.\]
This function is strictly convex and attains its minimum $0$ at $x= \L'(0) = \frac{1}{t_0}$  and therefore, the proof follows. 
\end{proof}

In the following proposition, we describe the precise asymptotic behavior   of the three first cumulants of $U_n,$ that are latter used to derive the mod-Gaussian convergence.

\begin{proposition}
\label{cvmom}
Let $\kappa^{(3)}_n$ be the third cumulant of $U_n$, then
\begin{align}
\lim_{n \to \infty} \frac{\E[U_n]}{2n} = \frac{1}{t_0} \ &, \  \lim_{n \to \infty} \frac{\Var (U_n)}{2n}  = \sigma^2(\gamma) = \frac{2(t_0-1)}{t_0(2t_0-1)}\\
 \lim_{n \to \infty} \frac{\kappa^{(3)}_n}{2n} & =\kappa(\gamma) =  \frac{4(-2t_0^2 + 5t_0 -1)(t_0-1)}{t_0 (2t_0-1)^3}.
\end{align}
\end{proposition}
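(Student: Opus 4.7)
The plan is to read off the three first cumulants of $U_n$ from a refined expansion of the cumulant generating function $\L_n(z):=\log\E[e^{zU_n}]$ at $z=0$, starting from identity (\ref{LapUn1}) derived in the proof of Theorem \ref{LDP_L_ng}.

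I would first sharpen the Laplace analysis of the integral $I_N(\gamma)$ in (\ref{firstHer1}) one order beyond what was needed for the LDP. Since $g_-(\gamma,\cdot)$ is strictly convex on $(0,\infty)$ with a unique interior minimiser $t_-^*(\gamma)$, and since $I_N^+(\gamma)$ is exponentially smaller than $I_N^-(\gamma)$, the classical Laplace formula yields, uniformly for $\gamma$ in any compact subset of $(0,\infty)$,
\begin{equation*}
\log I_N(\gamma)=-N\,g_-(\gamma,t_-^*(\gamma))+\tfrac12\log\!\frac{2\pi}{N\,\partial_s^2 g_-(\gamma,t_-^*(\gamma))}+O(1/N).
\end{equation*}
Substituting $\gamma\mapsto\gamma e^{2z}$ in (\ref{LapUn1}) and subtracting, the $-\tfrac12\log N$ contribution cancels between numerator and denominator, yielding
\begin{equation*}
\L_n(z)=2n\,\L(z)+\rho_n(z),
\end{equation*}
where $\L$ is the limit from (\ref{cvcall}) and $\rho_n(z)=z+\tfrac12\log[\partial_s^2 g_-(\gamma,t_0)/\partial_s^2 g_-(\gamma e^{2z},t(z))]+O(1/n)$ is a smooth function of $z$ whose first three derivatives at $z=0$ form bounded sequences in $n$. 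Since the $k$-th derivative of a cumulant generating function at $0$ equals the $k$-th cumulant, differentiating $\L_n$ at $z=0$ gives
\begin{equation*}
\frac{\L_n^{(k)}(0)}{2n}\;\xrightarrow[n\to\infty]{}\;\L^{(k)}(0),\qquad k=1,2,3,
\end{equation*}
and these limits are exactly the ratios $\E[U_n]/(2n)$, $\Var(U_n)/(2n)$ and $\kappa^{(3)}_n/(2n)$.

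It then remains to evaluate $\L^{(k)}(0)$ explicitly. The parametrisation (\ref{repar}) turns $z$ into a smooth monotone function of $t\in(1,\infty)$ with $t(0)=t_0$ and $dt/dz=-2t(t-1)/(2t-1)$. From $\L(z)=\widehat{\L}(t)-\widehat{\L}(t_0)$, a short chain-rule computation gives $\L'(z)=1/t(z)$, hence $\L'(0)=1/t_0$. Differentiating once more, $\L''(z)=-t^{-2}\,dt/dz=2(t-1)/(t(2t-1))$, which yields $\sigma^2(\gamma)=2(t_0-1)/(t_0(2t_0-1))$. A third application of the chain rule, $\L'''(z)=(d\L''/dt)\cdot(dt/dz)$, evaluated at $t=t_0$, then produces the stated expression for $\kappa(\gamma)$ after simplification.

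The main technical obstacle is establishing the Laplace expansion with enough precision to differentiate $\L_n$ term by term at $z=0$; thanks to the strict convexity and smoothness of $g_-(\gamma,\cdot)$ this reduces to standard saddle-point bookkeeping. Everything beyond that is routine algebra, which is considerably cleaner when carried out in the variable $t$ rather than directly in $z$.
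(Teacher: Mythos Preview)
Your proposal is correct and follows essentially the same route as the paper: use the Laplace analysis of $I_N(\gamma)$ from (\ref{LapUn1})--(\ref{firstHer1}) to justify $\tfrac{1}{2n}\L_n^{(k)}(0)\to\L^{(k)}(0)$ for $k=1,2,3$, and then compute $\L',\L'',\L^{(3)}$ via the $t$-parametrisation (\ref{repar}). The paper simply asserts the derivative convergence and ``leaves the details to the reader'', whereas you spell out the second-order Laplace expansion and the resulting decomposition $\L_n=2n\L+\rho_n$; this is exactly the omitted detail, not a different method.
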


\begin{proof}
The structure of the Laplace transform (\ref{LapUn1}) and the representation (\ref{firstHer1}) entail that
\[\lim_{n \to \infty} \frac{1}{2n}\L'_n(z) \rightarrow \L'(z) \ , \  \lim_{n \to \infty} \frac{1}{2n}\L''_ n(z) \rightarrow \L''(z)\ , \   \lim_{n \to infty} \frac{1}{2n}\L^{(3)}_ n(z) \rightarrow \L^{(3)}z)\]
(we leave the details to the reader).  Recall
\footnote{The result $\L'(0) = 1/t_0$  fits with the limiting normalized expectation in Theorem 2.4 of \cite{RSX13}, but the result for $\L''(0)$ does not fit with the limiting normalized variance claimed in the same theorem.}
$\L'(z) =1/t$ and (\ref{repar}), which gives 
\begin{align}
\label{secondderiv}
\L''(z) 
= \frac{2\left(t -1\right)}{t (2t -1)} \  , \  \L''(0) 
= \frac{2\left(t_0 -1\right)}{t_0 (2t_0 -1)}\,.
\end{align}
From (\ref{secondderiv}) we have 
\begin{align*}\L^{(3)} (z) 
= \frac{2\left(-2t^2 + 4t -1\right)}{t^2 (2t-1)^2}\times -\frac{2t(t -1)}{2t -1}= -\frac{4(-2t^2 + 4t -1)(t-1)}{t (2t-1)^3}\,,\end{align*}
and the limiting third cumulant is therefore given by taking $t=t_0$.
\end{proof}
Set $\overline{U}_n=U_n-\E\left[U_n\right]$ and, for $X=\sqrt{2n\gamma}$, denote by $\sigma^2_n (\g):=\frac{\Var(U_n)}{n}.$
\begin{theorem}[Mod-Gaussian convergence]\ \\
As $n \to \infty$, the sequence $\left(\frac{\overline{U}_n }{n^{1/3}}\right)_{n \in \N}$ converges mod-Gaussian with parameters
 $t_n= \sigma_n^2(\g) n^{1/3} $ and the limiting  function $\psi(z)=e^{\kappa(\gamma)\frac{z^3}{3}}.$ The convergence takes place on the whole complex plane.
\end{theorem}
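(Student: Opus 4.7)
The plan is to reduce the statement to a direct application of Theorem \ref{newcriterion}, using the probability generating function (\ref{2cl_prob_gen}) and the cumulant asymptotics from Proposition \ref{cvmom}. The random variable $U_n$ takes values in $\{0,2,\ldots,2n\}$ (since $L+2M=2n$ forces $L$ even), so it is bounded with non-negative integer values, meeting the structural hypothesis of Theorem \ref{newcriterion}.

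The first main step is to produce the factorization. Since $\alpha=-\frac{1}{2}>-1$, the Laguerre polynomial $L_n(x)=L_n^{-1/2}(x)$ has $n$ simple strictly positive roots $\alpha_1^{(n)},\ldots,\alpha_n^{(n)}$, and hence
\begin{equation*}
L_n(-X^2 t^2)=L_n(0)\prod_{j=1}^n\left(1+\frac{X^2 t^2}{\alpha_j^{(n)}}\right).
\end{equation*}
Plugging into (\ref{2cl_prob_gen}) gives
\begin{equation*}
P_n(t)=\prod_{j=1}^n P_{n,j}(t),\qquad P_{n,j}(t)=\frac{1+X^2 t^2/\alpha_j^{(n)}}{1+X^2/\alpha_j^{(n)}},
\end{equation*}
each $P_{n,j}$ being a polynomial in $t$ of degree exactly $2$ with non-negative coefficients. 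Thus the factorization hypothesis of Theorem \ref{newcriterion}, with maximal degree $2$ independent of $n$, is satisfied.

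The second step is to verify the cumulant condition (\ref{condcumulant}). With $X^2=2n\gamma$, Proposition \ref{cvmom} provides
\begin{equation*}
\frac{\Var(U_n)}{n}\longrightarrow 2\sigma^2(\gamma)>0,\qquad \frac{\kappa_n^{(3)}}{n}\longrightarrow 2\kappa(\gamma),
\end{equation*}
so (\ref{condcumulant}) holds with limits $2\sigma^2(\gamma)$ and $2\kappa(\gamma)$ in place of $\sigma^2$ and $\kappa$. Theorem \ref{newcriterion} then yields mod-Gaussian convergence on the whole complex plane of $\overline{U}_n/n^{1/3}$ with limiting function $\psi(z)=\exp\!\big(2\kappa(\gamma)z^3/6\big)=\exp\!\big(\kappa(\gamma)z^3/3\big)$ and parameter
\begin{equation*}
t_n=\frac{\Var(U_n)}{n^{2/3}}=\sigma_n^2(\gamma)\,n^{1/3},
\end{equation*}
which is exactly the statement.

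The only genuinely non-routine ingredient is the Laguerre-root factorization, but this is classical for $\alpha>-1$; everything else is a bookkeeping match between Proposition \ref{cvmom} and the hypotheses of Theorem \ref{newcriterion}. In particular, no sharper variance expansion is required, because the theorem is stated with the exact (not asymptotic) variance in $t_n$, so Remark \ref{smallo} is not invoked.
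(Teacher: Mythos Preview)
Your proof is correct and follows essentially the same route as the paper: factor $P_n(t)$ through the positive zeros of the Laguerre polynomial $L_n^{-1/2}$ to obtain a product of $n$ degree-two polynomials with non-negative coefficients, then invoke Proposition~\ref{cvmom} to feed Theorem~\ref{newcriterion}. Your version is in fact slightly more explicit in tracking the factor~$2$ between the $2n$-normalized cumulants of Proposition~\ref{cvmom} and the $n$-normalized hypotheses~(\ref{condcumulant}), which is exactly what produces the exponent $\kappa(\gamma)z^3/3$ rather than $\kappa(\gamma)z^3/6$.
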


\begin{proof}
Since the support of the measure of orthogonality of  the polynomials $L_n$ is $(0, \infty)$, it is known (Theorem 6.73 in \cite{Sz75}) that 
the zeros of $L_n(x)$ are positive and then
\begin{equation*}
L_n(x)=\prod_{k=1}^n \left(x-\xi_{k,n}\right),
\end{equation*}
with $\xi_{k,n}>0$ for all $k=1,\ldots,n$. The latter, together with equation (\ref{2cl_prob_gen}), implies that the probability generating function $P_n(t)$ of $U_n$  can be written as
\begin{equation}\label{line_pgf}
P_n(t)=\prod_{k=1}^n\frac{X^2t^2+\xi_{k,n}}{X^2+\xi_{k,n}},
\end{equation}
that is, as a product of $n$ polynomials with positive coefficients of degree $2$. To establish the mod-Gaussian convergence, now it suffices to  apply Theorem \ref{newcriterion} and Proposition \ref{cvmom}.
\end{proof}
\begin{remark}
It could be possible to apply Remark \ref{smallo} to this case, but we did not push the computation to that step, not to enlarge this paper.
\end{remark}
\begin{corollary}
\label{cors} With $t_n= \sigma_n^2(\g) n^{1/3} $ and $\psi(z) = e^{\kappa(\gamma) \frac{z^3}{3}}$ ,
\begin{enumerate}
\item
the conclusions of Theorems \ref{ldp} and \ref{clt} hold true, 
\item
the conclusions of Theorems \ref{speed} and \ref{llt} hold true. 
\end{enumerate}
\end{corollary}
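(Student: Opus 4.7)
The plan splits along the two items. For (1), the mod-Gaussian convergence of $\overline{U}_n/n^{1/3}$ with parameters $t_n=\sigma_n^2(\gamma)n^{1/3}$ and limit $\psi(z)=e^{\kappa(\gamma)z^3/3}$ has just been established on the whole complex plane, so Theorems~\ref{ldp} and~\ref{clt} apply verbatim.

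For (2), the task is to produce a zone of control in the sense of conditions~\ref{Z1}--\ref{Z2} and then invoke Theorems~\ref{speed} and~\ref{llt}. Because $\log\psi(z)=\kappa(\gamma)z^3/3$ is cubic, I would take the index $(v,w)=(3,3)$: the constraints $w\ge 2$, $\gamma\le 1/(w-2)=1$ of~\ref{Z2}, together with the additional requirement $\gamma\le(v-1)/2=1$ of Theorem~\ref{speed}, are then simultaneously met by any $\gamma\in(0,1]$. To verify~\ref{Z1} I would exploit the product factorization~\eqref{line_pgf}. Writing $a_{k,n}=X^2/(X^2+\xi_{k,n})\in(0,1)$ and $u=2i\xi/n^{1/3}$,
\begin{align*}
\log\psi_n(i\xi)=-\frac{i\xi\,\E[U_n]}{n^{1/3}}+\frac{t_n\xi^2}{2}+\sum_{k=1}^{n}\log\!\bigl(1+a_{k,n}(e^{u}-1)\bigr).
\end{align*}
Because $t_n=\Var(U_n)/n^{2/3}$ matches the exact (non-asymptotic) variance, a third-order Taylor expansion of each summand in $u$ cancels the first two right-hand-side terms against the linear ($\sum_k 2a_{k,n}=\E[U_n]$) and quadratic ($\sum_k 4a_{k,n}(1-a_{k,n})=\Var(U_n)$) cumulant sums. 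What remains is a cubic contribution whose coefficient converges to $\kappa(\gamma)/6$ by Proposition~\ref{cvmom}, plus a Taylor tail that I would control by the elementary bounds $|a_{k,n}|\le 1$ and $|e^{u}-1|\le |u|e^{|u|}$.

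The main obstacle lies in the higher-order estimate: one must verify that on a zone $|\xi|\le Dn^{\gamma/3}$ with $\gamma<1$ the terms of order $\ge 4$ sum uniformly to $O(|\xi|^3e^{c|\xi|^3})$, and not to a polluting higher-power or $n$-dependent contribution. This rests on the fact that $\sum_k a_{k,n}^{\,j}$ is of order $n$ for each fixed $j\ge 1$ (already implicit in the growth of $\E[U_n]$ and $\Var(U_n)$), which combined with the factor $(\xi/n^{1/3})^j$ in each term makes the $j$-th order contribution of order $|\xi|^j n^{1-j/3}$. For $j\ge 4$ and $|\xi|\le Dn^{\gamma/3}$ with $\gamma<1$ these terms are dominated by the cubic one, and exponentiating yields the required estimate $|\psi_n(i\xi)-1|\le K_1|\xi|^3 e^{K_2|\xi|^3}$. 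Once the zone of control is in place, Theorems~\ref{speed} and~\ref{llt} apply directly and produce a Kolmogorov rate $O(t_n^{-(\gamma+1/2)})$ together with a local limit theorem on every scale $\delta<\gamma+1/2$.
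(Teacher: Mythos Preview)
Your item (1) matches the paper exactly: mod-Gaussian convergence on $\C$ has just been shown, so Theorems~\ref{ldp} and~\ref{clt} apply directly.

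For item (2) you and the paper start from the same place---the factorization~\eqref{line_pgf}, equivalently the representation $U_n\stackrel{(d)}{=}2\sum_{k=1}^n X_{k,n}$ with independent $X_{k,n}\sim\mathcal B(a_{k,n})$---but then diverge. The paper does \emph{not} Taylor-expand by hand. Instead it invokes the ready-made ``method of cumulants'' bounds for sums of bounded independent variables (Theorems~9.1.6--9.1.7 in \cite{FMN16}), which give $|\kappa^{(r)}(\overline U_n)|\le n\,r^{r-2}2^{r-1}2^r$ for all $r\ge 2$, and then feeds this directly into Lemma~4.2 of \cite{FMN17} to obtain the zone of control $[-Dt_n,Dt_n]$ (so $\gamma=1$) with explicit constants $D=\frac{1}{8(4e+8)}$ and $K=8(2+e)$. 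This is a two-line proof.

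Your direct expansion is correct in spirit and the cancellation of the first two cumulants is exactly right, but the tail control is where the work hides. Saying ``$\sum_k a_{k,n}^{\,j}=O(n)$ for each fixed $j$'' is not enough: to sum over $j\ge 4$ and absorb everything into $K_1|\xi|^3 e^{K_2|\xi|^3}$ \emph{uniformly in $j$} you need a quantitative bound on the growth of the Bernoulli cumulants in $r$ (essentially $|\kappa^{(r)}|\lesssim r^{r-2}$), which is precisely what the black-box theorems supply. As written, your argument also restricts to $\gamma<1$, whereas the paper's route reaches $\gamma=1$ and hence the sharper Kolmogorov rate $O(t_n^{-3/2})$. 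So your approach is a valid hands-on substitute, but to make it complete you would end up reproving the cumulant bound that the paper simply cites.
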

\begin{proof}
We have only to prove 2) and actually  it suffices to show that 
the sequence $\left(\frac{\overline{U}_n}{n^{1/3}}\right)_{n \in \N}$ converges mod-Gaussian with index of control $(v,w)=(3,3)$ and a zone of control $[-Dt_n, Dt_n]$, where $D=\frac{1}{8e+96}$. Using (\ref{line_pgf}), $U_n$ can be written as
\begin{equation*}
U_n\stackrel{(d)}{=} 2\sum_{k=1}^nX_{k,n},\qquad X_{k,n}\sim\mathcal{B}\left(\frac{X^2}{X^2+\xi_{k,n}}\right),
\end{equation*}
where $(X_{k,n})_{1\leq k \leq n}$ are independent and $\mathcal{B}(p)$ denotes the Bernoulli distribution with parameter $p.$ 
Since $(X_{k,n})_{1\leq k \leq n}$ are independent, by Theorems 9.1.6 and 9.1.7 in \cite{FMN16}, we have the following bounds on the cumulants
\begin{equation*}
\left|\kappa^{(r)}\left(\overline{U}_n\right)\right|\le n r^{r-2}2^{r-1}2^r,
\end{equation*}
for all $r\ge 2$. Then Lemma 4.2 in \cite{FMN17}, implies that
\begin{equation}\label{bound-l}
\left|\E\left[e^{i\xi \frac{\overline{U}_n}{n^{1/3}}}\right]e^{t_n\frac{\xi^2}{2}}-1\right|\le K|\xi|^3e^{K|\xi|^3},
\end{equation}
for all $\xi\in\left[-Dt_n,Dt_n\right]$ with $D=\frac{1}{(4e+8)8}$ and $K=(2+e)8$.
\end{proof}

\subsection{The fugacity parameter $X=1$}

\begin{theorem}[LDP]\ \\
For $X=1$, the sequence of random variables $\left(\frac{U_n}{2\sqrt n}\right)_{n \in \N}$ satisfies the LDP with rate $\sqrt n$ and good rate function
\begin{equation*}
\Lambda^* (x) = x\log x -x +1 \ , \ (x >0)\,.
\end{equation*}
\end{theorem}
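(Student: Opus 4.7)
The plan is to apply the Gärtner--Ellis theorem (Theorem 2.3.6 in \cite{DZ10}) to the sequence $U_n/(2\sqrt n)$. By the closed form \eqref{2cl_mom_gen} at $X=1$, one has $\E[e^{zU_n}] = L_n(-e^{2z})/L_n(-1)$, with $L_n = L_n^{(-1/2)}$, so the entire argument reduces to understanding the large-$n$ asymptotics of the Laguerre polynomial evaluated at a fixed negative argument.

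The key analytic input is the classical Perron--Szegő expansion (see Theorem 8.22.3 in \cite{Sz75} and related references): locally uniformly in $y$ on compact subsets of $(0,\infty)$,
\begin{equation*}
L_n^{(\alpha)}(-y) = \frac{e^{-y/2}\,n^{\alpha/2 - 1/4}}{2\sqrt{\pi}\,y^{\alpha/2+1/4}}\,\exp\!\bigl(2\sqrt{ny}\bigr)\bigl(1 + O(n^{-1/2})\bigr),
\end{equation*}
so that $\log L_n(-y) = 2\sqrt{ny} + O(\log n)$ uniformly on compacts. Specializing to $y=e^{2z}$ and $y=1$ and dividing by the appropriate speed, I would obtain
\begin{equation*}
\L(z) = \lim_{n\to\infty}\frac{1}{2\sqrt n}\log\E\!\left[e^{zU_n}\right] = e^{z} - 1,
\end{equation*}
a limit that is finite, smooth, and essentially smooth on all of $\R$. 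Gärtner--Ellis then applies at speed $2\sqrt n$ (equivalently, at speed $\sqrt n$ with the rate function rescaled by $2$) and delivers the LDP with good rate function $\L^*(x) = \sup_{z\in\R}(zx - \L(z))$. The Legendre--Fenchel transform is immediate: the stationary equation $x = e^z$ gives $z = \log x$ and hence the announced rate function
\begin{equation*}
\L^*(x) = x\log x - x + 1, \qquad x>0.
\end{equation*}

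The main obstacle is to invoke the Perron--Szegő asymptotic with sufficient uniformity in the parameter $y = e^{2z}$ for $z$ in compact subsets of $\R$, so as to pass from the pointwise asymptotic of $L_n(-y)$ to locally uniform convergence of the full cumulant generating function $\L_n(z)$, which is what Gärtner--Ellis actually requires. Once this uniformity is properly cited, the remaining steps — verifying essential smoothness of $\L$ and carrying out the one-variable Legendre computation — are completely elementary.
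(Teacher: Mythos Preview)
Your proposal is correct and follows essentially the same route as the paper's own proof: both start from \eqref{2cl_mom_gen} at $X=1$, invoke Perron's asymptotic for Laguerre polynomials (Theorem~8.22.3 in \cite{Sz75}) to obtain $\frac{1}{2\sqrt n}\log\E[e^{zU_n}]\to e^z-1$, and then apply the G\"artner--Ellis theorem together with the elementary Legendre computation. One minor remark: G\"artner--Ellis only requires pointwise convergence of the normalized cumulant generating function, so the uniformity you flag as the ``main obstacle'' is not strictly needed for the LDP itself (though the paper does cite the uniform version, since it later uses the sharper expansion \eqref{2cl_asm} for the mod-Gaussian statement).
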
 
This result was implicitly obtained in \cite{RSX13} p. 131, with a different proof.
\begin{proof}
 The Laplace transform of $\left(U_n\right)_{n\in \N}$ from Equation (\ref{2cl_mom_gen})
is equal to 
\begin{align*}
\E \left[e^{z U_n}\right] =  \frac{L_n^{-1/2} (-e^{2z})}{L_n^{-1/2} (-1)}.
\end{align*}
We use Perron's formula (see Theorem 8.22.3 in \cite{Sz75} ) to characterize the asymptotic behavior of Laguerre polynomials 
\[L_n^{-1/2} (x) = \frac{1}{2\sqrt \pi} e^{x/2}n^{-1/2} e^{2\sqrt{-nx}} \left(1 + \mathcal O (n^{-1/2})\right)\,,\]
which holds uniformly in any compact set of $\C \setminus [0, \infty)$.
We derive
\begin{equation}
\label{2cl_asm} 
\E \left[e^{z U_n}\right] = e^{\frac{1}{2} (1 -e^{2z}) + 2 \sqrt n (e^z -1)} \left(1 + \mathcal O (n^{-1/2})\right),\end{equation}
uniformly in any compact set of $\{ z | \left| \Im (z)\right| < \frac{\pi}{4} \}$. 
We conclude that
\begin{equation*}
\frac{1}{2\sqrt n} \log \left(\E \left[e^{z U_n}\right] \right)  \rightarrow e^z -1 =: \Lambda(z)\,,
\end{equation*}
and we may apply the G\"artner-Ellis theorem, giving the LDP with rate function given by the Legendre dual of $\Lambda$.
\end{proof}

\begin{remark}
Since $\Lambda^*$ is the Cram\' er transform of the Poisson distribution, we could  expect to have a mod-Poisson convergence (see \cite{FMN16} for the details on mod-Poisson convergence).  We have indeed 
\[\lim_{n \to \infty} \E\left[e^{z U_n}\right] e^{2\sqrt n (e^z - 1)} =  e^{\frac{1}{2} (1 -e^{2z})}\]
but this convergence is valid only for $\{ z | \left| \Im (z)\right| < \frac{\pi}{4} \}.$ Therefore, the assumptions required for the mod-Poisson convergence are not satisfied.
\end{remark}
In contrast, the renormalized sequence $\left(\frac{U_n}{n^{1/6}}\right)_{n \in \N}$ converges mod-Gaussian. Indeed, from (\ref{2cl_asm}), the cumulant generating series of $\left(\frac{U_n}{n^{1/6}}\right)_{n \in \N}$  can be written as
\begin{align*}
\log \left(\E \left[e^{\frac{z U_n}{n^{1/6}}}\right] \right) &= \frac{1}{2} \left(1 -e^{\frac{2z}{n^{1/6}}}\right) + 2 \sqrt n \left(e^{\frac{z}{n^{1/6}}} -1\right)+o(1)\\
&=2z n^{1/3} + z^2 n^{1/6} + \frac{z^3}{3} + o(1),
\end{align*}
for all $\lbrace z | \left| \Im (z)\right| < \frac{ \pi}{4} n^{1/6} \rbrace$, which readily implies the following theorem.
\begin{theorem}[Mod-Gaussian convergence]\ \\
As $n \to \infty$, the sequence $\left(\frac{U_n-2\sqrt{n}}{ n^{1/6}}\right)_{n \in \N}$ converges mod-Gaussian with parameters $t_n=2 n^{1/6}$ and the limiting  function $\psi(z)=e^{\frac{z^3}{3}}.$ The convergence takes place on the whole complex plane.
\end{theorem}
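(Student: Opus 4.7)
The proof will be a direct extraction from the cumulant expansion already displayed just above the theorem statement, adjusted for the centering by $2\sqrt n$. The plan is to start from the Perron-type asymptotic \eqref{2cl_asm}, namely
\[
\E\!\left[e^{z U_n}\right] = \exp\!\bigl(\tfrac{1}{2}(1-e^{2z}) + 2\sqrt n\,(e^{z}-1)\bigr)\bigl(1+\mathcal O(n^{-1/2})\bigr),
\]
valid uniformly on compacts of $\{|\Im z|<\pi/4\}$, and then substitute $z\mapsto z/n^{1/6}$ while simultaneously incorporating the factor $e^{-2\sqrt n\,z/n^{1/6}}=e^{-2z n^{1/3}}$ that comes from recentering $U_n$ by $2\sqrt n$.

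The core step is a Taylor expansion at $0$ of the two functions appearing in the exponent under the rescaling. For the Gaussian part, $2\sqrt n\,(e^{z/n^{1/6}}-1)=2z n^{1/3}+z^{2}n^{1/6}+\frac{z^{3}}{3}+\mathcal O(n^{-1/6})$, so after subtracting the linear drift $2z n^{1/3}$ one is left with the quadratic term $z^{2}n^{1/6}=\frac{t_n}{2}z^{2}$ with $t_n=2n^{1/6}$, the cubic term $z^{3}/3$, and vanishing higher-order tails. The remaining contribution $\frac{1}{2}(1-e^{2z/n^{1/6}})=-z n^{-1/6}-z^{2}n^{-1/3}+\mathcal O(n^{-1/2})$ is $o(1)$ uniformly on compacts and therefore does not affect the limit. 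Combining these expansions with the error $\mathcal O(n^{-1/2})$ from Perron's formula yields
\[
\E\!\left[\exp\!\Bigl(z\tfrac{U_n-2\sqrt n}{n^{1/6}}\Bigr)\right]e^{-\tfrac{t_n}{2}z^{2}}
= \exp\!\Bigl(\tfrac{z^{3}}{3}+o(1)\Bigr)\longrightarrow e^{z^{3}/3}=\psi(z).
\]

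The only subtlety is to upgrade the local uniform convergence from compacts of the strip $\{|\Im z|<\pi/4\}$, where \eqref{2cl_asm} is valid, to local uniform convergence on the whole plane $\C$ in the rescaled variable. This is automatic: for any fixed compact $K\subset\C$ in the new variable $z$, the image $K/n^{1/6}$ shrinks to $\{0\}$ and is eventually contained in every fixed compact subset of the strip, so Perron's estimate applies there with the same $\mathcal O(n^{-1/2})$ control, and the Taylor remainders in the exponent are likewise uniformly bounded on $K/n^{1/6}$ for $n$ large. Hence the convergence of $\psi_n(z)$ to $e^{z^{3}/3}$ is locally uniform on $\C$, which is exactly the definition of mod-Gaussian convergence with parameters $t_n=2n^{1/6}$ and limiting function $\psi(z)=e^{z^{3}/3}$. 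No real obstacle arises; the main point is simply to book-keep the three leading terms and to observe that the strip of validity of Perron's formula, when dilated by $n^{1/6}$, exhausts $\C$.
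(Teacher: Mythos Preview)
Your proof is correct and follows essentially the same approach as the paper: both start from the Perron asymptotic \eqref{2cl_asm}, substitute $z\mapsto z/n^{1/6}$, and Taylor-expand the exponent to isolate the terms $2zn^{1/3}+z^{2}n^{1/6}+\tfrac{z^{3}}{3}+o(1)$. Your version is slightly more explicit about the recentering by $2\sqrt n$ and about why the strip of validity, dilated by $n^{1/6}$, eventually covers any fixed compact in $\C$, but otherwise the argument is identical.
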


\begin{corollary}
With $t_n = 2n^{1/6}$ and $\psi(x) = e^{\frac{x^3}{3}}$, the conclusions of Theorems \ref{ldp} and \ref{clt} hold true.
\end{corollary}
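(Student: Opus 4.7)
The plan is to verify the hypotheses of Theorems \ref{ldp} and \ref{clt} in this setting and then read off their conclusions; once the mod-Gaussian convergence of the preceding theorem is in hand, the corollary is essentially immediate.

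First, I would recall what the two theorems require as input: a sequence converging mod-Gaussian on some strip $S_{(c,d)}$ with $c<0<d$ and an analytic limiting function that does not vanish on $(c,d)\cap\R$. The theorem just established gives mod-Gaussian convergence of $\bigl((U_n - 2\sqrt n)/n^{1/6}\bigr)_{n\in\N}$ on the entire complex plane, with parameters $t_n = 2n^{1/6}$ and limiting function $\psi(z) = e^{z^3/3}$. Since $\psi$ is entire and nowhere vanishing, both hypotheses hold with $(c,d)=(-\infty,+\infty)$; in particular, no strip issue remains, unlike in the putative mod-Poisson convergence discussed in the preceding remark.

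Applying Theorem \ref{ldp} at a fixed $x>0$ then yields
\[\P\!\left[\frac{U_n-2\sqrt n}{n^{1/6}}\ge 2n^{1/6}\,x\right]=\frac{e^{-n^{1/6} x^2}}{x\sqrt{4\pi n^{1/6}}}\,e^{x^3/3}\,(1+o(1)),\]
together with the analogous lower-tail statement for $x<0$. Similarly, Theorem \ref{clt}, applied with the same $t_n$ and $\psi$, gives, for every $y=o(n^{1/12})$,
\[\P\!\left[\frac{U_n-2\sqrt n}{\sqrt 2\,n^{1/4}}\ge y\right]=\P\bigl[\mathcal{N}(0,1)\ge y\bigr](1+o(1)),\]
from which the ordinary CLT at scale $n^{1/4}$ follows by specialising to $y=O(1)$.

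There is no genuine obstacle at this stage: all the analytic work is already contained in the derivation of \eqref{2cl_asm} via Perron's formula for $L_n^{-1/2}$, which after substitution $z\mapsto z/n^{1/6}$ manifestly produces the Gaussian term $z^2 n^{1/6}$ and the cubic correction $z^3/3$ in the cumulant expansion, with the remaining terms vanishing as $n\to\infty$ locally uniformly in $z$. The corollary is therefore merely a plug-in of this mod-Gaussian data into two general black-box theorems from the framework of Subsection~\ref{introduction: mod-Gauss}.
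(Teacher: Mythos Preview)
Your proposal is correct and matches the paper's treatment exactly: the paper states this corollary without proof, treating it as an immediate consequence of the mod-Gaussian convergence just established. Your explicit verification that $\psi(z)=e^{z^3/3}$ is entire and nowhere vanishing, together with the explicit instantiation of the conclusions of Theorems~\ref{ldp} and~\ref{clt}, simply spells out what the paper leaves implicit.
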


\section{The limiting results for the two charge ensemble on the circle} \label{results: circle}
Let us recall that $V_n \in [0,2n]$ for every $n$.
\begin{theorem}[LDP]\ \\
The sequence of random variables $\left(\frac{V_n}{n}\right)_{n \in \N}$ satisfies the LDP in $(0,2)$ with speed $n$ 
 and good rate function $\Lambda^*$  which is  the Legendre dual of 
\begin{align}
\nonumber
\Lambda(z) := 
2\rho e^{z}  \arctan \frac{1}{\rho e^{z }} - 2\rho \arctan \frac{1}{\rho} + \log \frac{\rho^2 e^{2z} + 1}{\rho^2 + 1}\,.
\end{align}
The function $x \mapsto \Lambda^*(x)$ is convex on $(0,2)$, reaches its unique minimum $0$ at $\Lambda'(0) = 2\rho \arctan 1/\rho$ and satisfies
\begin{align} \Lambda^*(0) = -2\rho \arctan \left(\frac{1}{\rho}\right) -\log (1+\rho^2) \ , \ \lim_{x \downarrow 0}  (\Lambda^*)'(x) = - \infty\ , \ 
\lim_{x \uparrow 2} \Lambda^*(x) = \infty\,.
\end{align}

\end{theorem}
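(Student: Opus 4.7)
The plan is to apply the Gärtner-Ellis theorem. Starting from the product formula (\ref{2cc_mom_gen}), dividing numerator and denominator of each factor by $(2n)^2$ gives
\[
\frac{1}{n}\log \E\left[e^{zV_n}\right] = \frac{1}{n}\sum_{k=1}^n \log\frac{\rho^2 e^{2z} + \left(\frac{2k-1}{2n}\right)^2}{\rho^2 + \left(\frac{2k-1}{2n}\right)^2},
\]
which is a midpoint Riemann sum on $[0,1]$ of a smooth bounded integrand. Hence, for every $z\in\R$, the right-hand side converges to
\[
\Lambda(z) = \int_0^1 \log\frac{\rho^2 e^{2z}+x^2}{\rho^2+x^2}\,dx.
\]

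Next I would evaluate this integral in closed form. A single integration by parts yields $\int_0^1 \log(a^2+x^2)\,dx = \log(a^2+1) - 2 + 2a\arctan(1/a)$ for every $a>0$. Applying this identity with $a=\rho e^z$ in the numerator and $a=\rho$ in the denominator, the constants $-2$ cancel and the remaining expression coincides with the $\Lambda(z)$ in the statement. Since $\Lambda$ is finite and real-analytic on all of $\R$, it is essentially smooth in the sense of Dembo-Zeitouni, and the Gärtner-Ellis theorem then yields the LDP at speed $n$ with good rate function $\Lambda^*$ equal to the Legendre dual of $\Lambda$, automatically supported in $[0,2]$ since $V_n/n$ lies in that interval.

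For the qualitative properties, differentiating under the integral gives the convenient identity
\[
\Lambda'(z) = \int_0^1 \frac{2\rho^2 e^{2z}}{\rho^2 e^{2z}+x^2}\,dx = 2\rho e^z \arctan\frac{1}{\rho e^z},
\]
which is strictly increasing in $z$, tends to $0$ as $z\to-\infty$ and to $2$ as $z\to+\infty$. Hence $\Lambda$ is strictly convex, and $\Lambda^*$ is strictly convex on $(0,2)$ with unique minimum $0$ at $x=\Lambda'(0)=2\rho\arctan(1/\rho)$. Since $(\Lambda^*)'(x)=(\Lambda')^{-1}(x)$, one reads off $(\Lambda^*)'(0^+)=-\infty$. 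The boundary values are then obtained by letting $z\to\pm\infty$ in the parametric representation $\Lambda^*(\Lambda'(z)) = z\Lambda'(z) - \Lambda(z)$.

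The main obstacle is the asymptotic analysis in this last step: at the $-\infty$ end, $\Lambda'(z) \sim \pi\rho e^z \to 0$ and $\Lambda(z)$ tends to an explicit constant, giving the claimed $\Lambda^*(0)$; at the $+\infty$ end, the expansion $y\arctan(1/y) = 1 - 1/(3y^2) + O(y^{-4})$ with $y=\rho e^z$ extracts the linear behavior $\Lambda(z) = 2z + C_\rho + o(1)$, and the limit of $\Lambda^*(x)$ as $x\uparrow 2$ is then recovered by tracking the cancellations in $xz-\Lambda(z)$ along $x=\Lambda'(z)$.
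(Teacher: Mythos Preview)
Your approach is essentially identical to the paper's: both apply the G\"artner--Ellis theorem after recognizing $\frac{1}{n}\log\E[e^{zV_n}]$ as a midpoint Riemann sum converging to $\int_0^1\log\frac{\rho^2e^{2z}+x^2}{\rho^2+x^2}\,dx$, evaluate this integral in closed form, and compute $\Lambda'(z)=2\rho e^z\arctan\bigl(1/(\rho e^z)\bigr)$ to read off the qualitative behaviour of $\Lambda^*$. Your write-up is in fact more detailed than the paper's, which dismisses the boundary assertions as ``straightforward.''

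Be warned, however, that if you actually carry out the boundary computations sketched in your last paragraph you will \emph{not} recover the values asserted in the statement. At $z\to-\infty$ one has $z\Lambda'(z)\to 0$ and $\Lambda(z)\to -2\rho\arctan(1/\rho)-\log(1+\rho^2)$, so your parametric formula yields
\[
\Lambda^*(0)=2\rho\arctan\frac{1}{\rho}+\log(1+\rho^2),
\]
which is positive (as a rate function must be); the statement carries the opposite sign. At $z\to+\infty$ the expansion you indicate gives $\Lambda(z)=2z+2+2\log\rho-2\rho\arctan(1/\rho)-\log(\rho^2+1)+o(1)$, and hence $z\Lambda'(z)-\Lambda(z)$ tends to the \emph{finite} limit $-2+2\rho\arctan(1/\rho)+\log(1+1/\rho^2)$; thus $\Lambda^*$ stays bounded as $x\uparrow 2$, contradicting the claimed $\lim_{x\uparrow 2}\Lambda^*(x)=\infty$. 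This is consistent with the direct computation $\frac{1}{n}\log\P(V_n=2n)\to\int_0^1\log\frac{\rho^2}{\rho^2+x^2}\,dx$, which is finite. These are defects in the stated theorem rather than in your method; the paper's own proof does not detect them either, but you should not assert that your calculation ``gives the claimed'' values without checking.
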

\bigskip

It means that, when $n$ tends to infinity,  $V_n/ n$ tends to $\Lambda'(0)$ in probability and that
\[\lim_{n \to \infty} n^{-1}\log \mathbb P(V_n = 0) = - \Lambda^*(0), \quad \lim_{n \to \infty} n^{-1}\log \mathbb P(V_n = 2n) = - \infty\,.\]

\begin{proof}
We use the G\"{a}rtner-Ellis theorem. From (\ref{2cc_mom_gen}), the moment generating function of the sequence $(V_n)_{n \in \N}$ can be written as
\begin{align*}
 \E \left[ e^{zV_n}\right] = \prod_{k=1}^n \frac{ \rho^2 e^{2z}  + \left(\frac{2k-1}{2n}\right)^2  }{  \rho ^2 + \left(\frac{2k-1}{2n}\right)^2  }\,.
\end{align*} 
From the convergence of the Riemann sums we have 
\begin{align}
\label{Riem}
\lim_{n \to \infty} \frac{1}{n} \log E\left[ e^{z V_n} \right]&=  \int_0^{1} \log \left(\frac{\rho^2 e^{2z} + t^2}{\rho^2 + t^2}\right) dt\\
\nonumber & =
2\rho e^{z}  \arctan \frac{1}{\rho e^{z }} - 2\rho \arctan \frac{1}{\rho} + \log \frac{\rho^2 e^{2z} + 1}{\rho^2 + 1} =: \Lambda(z)\,.
\end{align}
The first derivative is clearly
\begin{align}\label{derivlam}\Lambda'(z) &= \int_0^1 \frac{2\rho^2e^{2z}}{\rho^2e^{2z} + t^2} dt = 2\rho e^z \arctan\frac{1}{\rho e^z}\,,\end{align}
and all the remaining assertions are straightforward.
\end{proof}

In the following proposition, we limiting results on the three first cumulants of $V_n$.
\begin{proposition}
If $\kappa^{(3)}_n$ is the third cumulant of $V_n$, then
\begin{align}
\lim_{n \to \infty} \frac{\E\left[V_n\right]}{n} = 2 \rho \arctan \frac{1}{\rho} \ &, \ \lim_{n \to \infty} \frac{\Var(V_n)}{n} =  2\rho \arctan\frac{1}{\rho} -  \frac{2\rho^2 }{1+\rho^2} =: \sigma^2(\rho)\\
\lim_{n \to \infty} \frac{\kappa^{(3)}_n}{n} &=   \kappa(\rho) = 2\rho \arctan \frac{1}{\rho} -2\rho^2\frac{3+\rho^2}{(1+\rho^2)^2}
\end{align}
\end{proposition}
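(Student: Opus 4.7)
The plan is to read the three limits directly off the derivatives of the limiting cumulant generating function at $0$. By the previous theorem, the functions
\[
\Lambda_n(z) := \frac{1}{n} \log \E\!\left[e^{zV_n}\right] = \frac{1}{n}\sum_{k=1}^n \log\!\left(\frac{\rho^2 e^{2z} + \left(\frac{2k-1}{2n}\right)^2}{\rho^2 + \left(\frac{2k-1}{2n}\right)^2}\right)
\]
converge to $\Lambda(z) = \int_0^1 \log\!\left((\rho^2e^{2z}+t^2)/(\rho^2+t^2)\right)dt$ as Riemann sums. The summands are holomorphic and uniformly bounded on any compact $K\subset \C$ (the denominator $\rho^2 e^{2z}+ t^2$ stays away from $0$ for $t\in[0,1]$ and $z\in K$), so the convergence is in fact locally uniform on $\C$. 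By Weierstrass' theorem on analytic convergence, $\Lambda_n^{(r)}(0)\to\Lambda^{(r)}(0)$ for every $r\ge 1$. Since $\kappa_n^{(r)} = (\log\E[e^{zV_n}])^{(r)}|_{z=0}$, this gives $\kappa_n^{(r)}/n \to \Lambda^{(r)}(0)$.

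It then remains to compute $\Lambda'(0),\Lambda''(0),\Lambda'''(0)$ by differentiation under the integral sign. First,
\[
\Lambda'(0) = \int_0^1 \frac{2\rho^2}{\rho^2+t^2}\,dt = 2\rho\arctan\frac{1}{\rho},
\]
which yields the asserted limit of $\E[V_n]/n$. Differentiating once more gives
\[
\Lambda''(0) = \int_0^1 \frac{4\rho^2 t^2}{(\rho^2+t^2)^2}\,dt.
\]
Writing $4\rho^2 t^2 = 4\rho^2(\rho^2+t^2) - 4\rho^4$ and using the standard antiderivative of $(\rho^2+t^2)^{-2}$ one obtains $\Lambda''(0) = 2\rho\arctan(1/\rho) - 2\rho^2/(1+\rho^2)=\sigma^2(\rho)$.

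The main computational step is the third derivative. Starting from $\Lambda''(z)$ above and differentiating once more yields
\[
\Lambda'''(0) = \int_0^1 \frac{8\rho^2 t^2(t^2-\rho^2)}{(\rho^2+t^2)^3}\,dt.
\]
The trick I will use is the algebraic decomposition $t^2(t^2-\rho^2) = (\rho^2+t^2)^2 - 3\rho^2(\rho^2+t^2) + 2\rho^4$, which reduces the integral to a linear combination of $I_k := \int_0^1 (\rho^2+t^2)^{-k}\,dt$ for $k=1,2,3$. These are computed from $I_1 = \rho^{-1}\arctan(1/\rho)$ together with the reduction formula
\[
I_{k+1} = \frac{1}{2k\rho^2}\!\left(\frac{1}{(1+\rho^2)^k} + (2k-1)I_k\right).
\]
Summing the three contributions and simplifying produces the cancellation of the $12\rho^2/(1+\rho^2)$ and $12\rho\arctan(1/\rho)$ terms, leaving $\Lambda'''(0) = 2\rho\arctan(1/\rho) - 2\rho^2(3+\rho^2)/(1+\rho^2)^2 = \kappa(\rho)$, as claimed. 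The only real obstacle is the bookkeeping in this last computation; all other steps are straightforward.
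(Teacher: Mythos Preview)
Your proof is correct in substance, and your justification via Weierstrass' theorem for the convergence of derivatives is more explicit than the paper's (which simply refers back to the analogous argument in Section~2). One small overstatement: the convergence is not locally uniform on all of $\C$, since the integrand $\log(\rho^2 e^{2z}+t^2)$ has singularities where $\rho^2 e^{2z}+t^2=0$, i.e.\ along $\Im z=\pm\pi/2$ for suitable real parts depending on $t\in(0,1]$. This is harmless --- a neighbourhood of $z=0$ is all you need --- but you should restrict the domain accordingly.

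The computational route you take differs from the paper's. You differentiate under the integral sign and then evaluate the resulting integrals $\int_0^1 4\rho^2 t^2/(\rho^2+t^2)^2\,dt$ and $\int_0^1 8\rho^2 t^2(t^2-\rho^2)/(\rho^2+t^2)^3\,dt$ via algebraic decomposition and the reduction formula for $I_k=\int_0^1(\rho^2+t^2)^{-k}\,dt$. The paper instead exploits the closed form $\Lambda'(z)=2\rho e^z\arctan(1/(\rho e^z))$ already obtained in the preceding LDP theorem: differentiating this expression directly in $z$ (chain rule on $u\mapsto 2u\arctan(1/u)$ with $u=\rho e^z$) gives $\Lambda''(z)$ and $\Lambda^{(3)}(z)$ in closed form with no integrals left to evaluate, and one simply sets $z=0$. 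Your approach is self-contained and transparent but requires more bookkeeping; the paper's shortcut is considerably quicker once the closed form for $\Lambda'$ is in hand.
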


\begin{proof}
The proof is the same as in Section 2, with $\Lambda'$ given in (\ref{derivlam}), so that 
\begin{align*}
\Lambda'' (z) &= 2\rho e^z \arctan \frac{1}{\rho e^z} -\frac{2\rho^2 e^{2z}}{1+\rho^2 e^{2z}}\,,\\
\Lambda^{(3)} (z) &= 2\rho e^z \arctan \frac{1}{\rho e^z} -2\rho^2e^{2z}\frac{3+\rho^2e^{2z}}{(1+\rho^2e^{2z})^2}\,.
\end{align*}
\end{proof}
These limits coincide with the results  of Theorem 2.2 in \cite{SS14}, up to a change of notation since $L_N (Nr) = V_n$ with $N=2n$ and $\rho = 2r.$\
\\
We set $\overline{V}_n = V_n - \E[V_n]$ and denote by $\sigma_n^2(\rho) := \frac{\Var(V_n)}{n}.$
\begin{theorem}[Mod-Gaussian convergence]\ \\ \label{mod2cc}
As $n\to \infty,$ the sequence $\left(\frac{\overline{V}_n}{n^{1/3}} \right)_{n \in \N}$ converges mod-Gaussian with parameters $t_n :=  n^{1/3} \sigma_n^2(\rho)$ and the limiting function
 $\psi(z):=e^{\kappa(\rho)\frac{z^3}{6}} \,.$ The convergence takes place on the whole complex plane.
\end{theorem}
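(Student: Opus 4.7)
The plan is to invoke Theorem \ref{newcriterion} directly, with essentially no additional work beyond what is already in place. Two things need to be checked: the structural factorization hypothesis, and the cumulant conditions (\ref{condcumulant}).

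For the structural hypothesis, observe that $V_n$ takes values in $\{0,1,\dots,2n\}$, so it is a bounded non-negative integer-valued random variable. Moreover, the probability generating function is already given in a factored form by (\ref{2cc_prob_gen}):
\begin{equation*}
P_n(t) = \prod_{k=1}^n P_{n,k}(t), \qquad P_{n,k}(t) = \frac{(2n\rho)^2\, t^2 + (2k-1)^2}{(2n\rho)^2 + (2k-1)^2},
\end{equation*}
and each $P_{n,k}$ is a polynomial of degree $2$ (independent of $n$) with non-negative coefficients. This is exactly the factorization required by Theorem \ref{newcriterion}.

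For the cumulant conditions, the preceding proposition already gives $\mathrm{Var}(V_n)/n \to \sigma^2(\rho)$ and $\kappa^{(3)}_n/n \to \kappa(\rho)$, so the only residual point is to verify that $\sigma^2(\rho) > 0$. Setting $x = 1/\rho$, this is equivalent to $\arctan(x) > x/(1+x^2)$ for $x>0$; both sides vanish at $0$, and the derivative of the difference equals $2x^2/(1+x^2)^2 > 0$, so the inequality is strict on $(0,\infty)$.

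Combining these, Theorem \ref{newcriterion} yields mod-Gaussian convergence of $(V_n - \E[V_n])/n^{1/3}$ with parameter $\mathrm{Var}(V_n)\,n^{-2/3} = n^{1/3}\sigma_n^2(\rho)$ and limiting function $\exp(\kappa(\rho) z^3/6)$, on the whole complex plane, which is exactly the statement. There is no real obstacle here: the polynomial factorization is explicit from the model, the cumulant asymptotics are already computed, and the positivity of $\sigma^2(\rho)$ is an elementary inequality; if anything is worth flagging, it is just the check $\sigma^2(\rho)>0$.
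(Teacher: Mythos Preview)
Your proof is correct and follows essentially the same route as the paper's: observe from (\ref{2cc_prob_gen}) that $P_n$ factors as a product of $n$ degree-$2$ polynomials with non-negative coefficients, then apply Theorem~\ref{newcriterion} together with the cumulant asymptotics just established. The only addition is your explicit verification that $\sigma^2(\rho)>0$, which the paper leaves implicit but is indeed part of hypothesis~(\ref{condcumulant}).
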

\proof

From (\ref{2cc_prob_gen}), we see that the probability generating function of $V_n$ is a product of $n$ positive polynomials of degree $2$.  
It is then enough to apply Proposition \ref{newcriterion} and the above proposition. 

\begin{corollary}
With $t_n = n^{1/3}\sigma_n^2(\rho)$ and $\psi(x) = e^{\kappa(\rho)\frac{x^3}{6}}$, the conclusions of Theorems  \ref{ldp}, \ref{clt}, \ref{speed}, \ref{llt} hold true.
\end{corollary}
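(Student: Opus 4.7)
The strategy is to follow the blueprint of Corollary \ref{cors}, exploiting the fact that the two-charge ensemble on the circle has the same polynomial/Bernoulli structure as the line model. Since Theorem \ref{mod2cc} has already established mod-Gaussian convergence of $\overline{V}_n/n^{1/3}$ with the stated parameters and limiting function, the conclusions of Theorems \ref{ldp} and \ref{clt} are immediate formal consequences of the general theory recalled in Subsection \ref{introduction: mod-Gauss}. The real work is to verify Theorems \ref{speed} and \ref{llt}, which require producing a zone of control in the sense of conditions (Z1)--(Z2).

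To obtain such a zone, I would first rewrite each factor of (\ref{2cc_prob_gen}) in a Bernoulli form. Setting $p_{k,n} := (2n\rho)^2/((2n\rho)^2 + (2k-1)^2)$, the $k$-th factor is exactly $p_{k,n}\, t^2 + (1-p_{k,n})$, which is the probability generating function of $2X_{k,n}$ with $X_{k,n} \sim \mathcal{B}(p_{k,n})$. Consequently $V_n \stackrel{(d)}{=} 2\sum_{k=1}^n X_{k,n}$ with independent summands, so $\overline{V}_n$ is a centered sum of $n$ independent bounded variables. Invoking Theorems 9.1.6 and 9.1.7 of \cite{FMN16} then yields the classical cumulant bound $|\kappa^{(r)}(\overline{V}_n)| \leq n\, r^{r-2}\, 2^{r-1}\, 2^r$ for $r \geq 2$. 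Plugging this into Lemma 4.2 of \cite{FMN17} produces a zone of control of the form $[-Dt_n, Dt_n]$ (so $\gamma = 1$) with index $(v,w) = (3,3)$ and explicit constants $D$ and $K$, in complete analogy with the line case.

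It then remains to check that condition (Z2) is met with this choice, namely $w = 3 \geq 2$, $\gamma = 1 \in [-\tfrac12, 1/(w-2)] = [-\tfrac12, 1]$, and $D \leq 1/(4K)$, and that the additional hypothesis $\gamma \leq (v-1)/2 = 1$ required by Theorem \ref{speed} is satisfied (here with equality). The exponent $\delta < \gamma + \tfrac12 = \tfrac32$ appearing in Theorem \ref{llt} is then unconstrained in the relevant range. The only obstacle is pure bookkeeping: tracking the explicit constants through Lemma 4.2 of \cite{FMN17} and choosing $D$ small enough to meet simultaneously the (Z2) inequality and the hypotheses of the local limit theorem. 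Since the Bernoulli decomposition and the cumulant estimate are formally identical to those used in Corollary \ref{cors}, no new analytic ingredient beyond Theorem \ref{mod2cc} is needed.
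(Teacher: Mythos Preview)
Your proposal is correct and follows exactly the paper's approach: the paper's proof simply states that it is the same as that of Corollary~\ref{cors}, using the Bernoulli decomposition $V_n \stackrel{(d)}{=} 2\sum_{k=1}^n X_{k,n}$ with $X_{k,n}\sim\mathcal{B}\!\left(\frac{(2n\rho)^2}{(2n\rho)^2+(2k-1)^2}\right)$ independent. You have reproduced this argument in full detail, including the cumulant bounds from \cite{FMN16} and the zone-of-control estimate from Lemma~4.2 of \cite{FMN17}, so nothing is missing.
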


The proof is the same as that of Corollary \ref{cors},  
since from (\ref{2cc_mom_gen}) $V_n$ can be written as
\begin{equation}
V_n\stackrel{(d)}{=} 2\sum_{k=1}^n X_{k,n},\qquad X_{k,n}\sim\mathcal{B}\left(\frac{(2n\rho)^2}{(2n\rho)^2+(2k-1)^2}\right),
\end{equation}
where $(X_{k,n})_{1\leq k \leq n}$ are independent. 

\bibliographystyle{plain}
\bibliography{Bib_mod_phi}

\end{document}